\numberwithin{equation}{section}
\newcounter{thm-intro}
\newtheorem{Thm*}[thm-intro]{Theorem}
\newtheorem{Lem}[equation]{Lemma}
\newtheorem{Prop}[equation]{Proposition}
\newtheorem{Thm}[equation]{Theorem}
\theoremstyle{remark}
\newtheorem{Def}[equation]{Definition}
\newtheorem{Hyp}[equation]{Hypothesis}
\newtheorem{Rem}[equation]{Remark}
\newcommand{\nc}{\newcommand}
\nc{\dmo}{\DeclareMathOperator}
\dmo{\car}{char}
\dmo{\coker}{coker}
\dmo{\cone}{cone}
\dmo{\Der}{D}
\dmo{\Ext}{Ext}
\dmo{\Gal}{Gal}
\dmo{\Hm}{H}
\dmo{\Hom}{Hom}
\dmo{\Ind}{Ind}
\dmo{\modname}{mod}%
\dmo{\Mod}{Mod}
\dmo{\Spc}{Spc}
\dmo{\Spec}{Spec}
\dmo{\Spech}{Spec^h}
\dmo{\stab}{stab}
\dmo{\supp}{supp}
\dmo{\uAut}{\underline{Aut}^{\otimes}_{\KK}}
\nc{\bti}{\mathrm{Re}_{\mathrm{B}}}
\nc{\cat}[1]{\mathscr{#1}}
\nc{\cA}{\cat{A}}
\nc{\cB}{\cat{B}}
\nc{\colim}{\mathop{\mathrm{colim}}}
\nc{\cP}{\cat{P}}
\nc{\cT}{\cat{T}}
\nc{\cX}{\mathscr{X}}
\nc{\ladic}{\mathrm{Re}_{\ell}}
\nc{\eg}{{\sl e.g.}\xspace}
\nc{\holim}{\mathop{\mathrm{holim}}}
\nc{\ie}{{\sl i.e.}\ }
\nc{\into}{\mathop{\rightarrowtail}}
\nc{\inv}{^{-1}}
\nc{\KM}{K^{\mathrm{M}}}
\nc{\loccit}{{\sl loc.\ cit.}\xspace}
\nc{\nori}{\mathrm{N}} 
\nc{\onto}{\mathop{\twoheadrightarrow}}
\nc{\op}{^{\opname}}
\nc{\potimes}[1]{^{\otimes #1}}
\nc{\sbull}{{\scriptscriptstyle\bullet}}
\nc{\unit}{\mathbb{1}}
\dmo{\End}{End}
\nc{\isoto}{\overset{\sim}{\,\to\,}}
\dmo{\chr}{char}%
\dmo{\comod}{comod}
\dmo{\DMcet}{DM^{\mathrm{c}}_{\textup{\et}}}
\dmo{\DMlet}{DM^{\vee}_{\textup{\et}}}
\dmo{\DMch}{DM_{\mathrm{h,c}}}
\dmo{\DMlch}{DM_{\mathrm{h,lc}}}
\dmo{\DMgmet}{DM^{\mathrm{gm}}_{\textup{\et}}}
\dmo{\DMbig}{DM}
\dmo{\DM}{DM^{\geom}}
\dmo{\DMbiget}{DM_{\textup{\et}}}
\dmo{\HM}{HM} 
\dmo{\HMeff}{HM^{\mathrm{eff}}}
\dmo{\hoComod}{hoComod}
\dmo{\id}{id}
\dmo{\Img}{Im}
\dmo{\Komp}{K}
\dmo{\modules}{mod}%
\dmo{\filmod}{mod^{fil}}
\dmo{\MM}{MM}
\dmo{\smallb}{b}
\dmo{\smallc}{c}
\dmo{\geom}{gm}
\nc{\cf}{{\sl cf.}\ }
\nc{\Db}{\Der^{\smallb}}
\nc{\Dbc}{\Der^{\smallb}_{\smallc}}
\nc{\D}{\Der}
\nc{\et}{\textrm{\'et}}
\nc{\etal}{{\sl et al.}}
\nc{\Kb}{\Komp^{\smallb}}
\dmo{\VBName}{VB}
\nc{\VB}[1]{\VBName_{#1}}
\dmo{\QCName}{QC}
\nc{\QC}[1]{\QCName_{#1}}
\newcommand{\CC}{\mathbb{C}}
\newcommand{\FF}{\mathbb{F}} 
\newcommand{\GG}{\mathbb{G}}
\newcommand{\KK}{\mathbb{K}}
\newcommand{\QQ}{\mathbb{Q}}
\newcommand{\ZZ}{\mathbb{Z}}
\newcommand{\kw}{motives, \'etale motives, tensor triangular geometry, tannakian category, classification}
\begin{document}
\title{A note on Tannakian categories and mixed motives}
\author{Martin Gallauer}
\address{Martin Gallauer, Mathematical Institute, University of Oxford}
\email{gallauer@maths.ox.ac.uk}
\urladdr{https://people.maths.ox.ac.uk/gallauer}
\begin{abstract}
  We explain why every non-trivial exact tensor functor on the triangulated category of mixed motives over a field $\FF$ has zero kernel, if one assumes ``all'' motivic conjectures.
  In other words, every non-zero motive generates the whole category up to the tensor triangulated structure.
  Under the same assumptions, we also give a complete classification of triangulated \'etale motives over $\FF$ with integral coefficients, up to the tensor triangulated structure, in terms of the characteristic and the orderings of $\FF$.
\end{abstract}
\subjclass[2010]{14F42; 18D10, 18E30, 18G99, 19E15, 14L17}
\keywords{\kw}

\maketitle

\section{Introduction}
\label{sec:intro}
Let $\FF$ be a field, and consider Voevodsky's triangulated category of motives over~$\FF$ with rational coefficients, as usual denoted by $\DM(\FF;\QQ)$.
It is a strong candidate for the derived category of a conjectural abelian category of mixed motives over $\FF$:
\begin{equation}
  \label{eq:DM=Db}
  \DM(\FF;\QQ)\overset{?}{\simeq}\Db(\MM(\FF;\QQ))
\end{equation}
However, at this point, the construction of $\MM(\FF;\QQ)$ and the equivalence~(\ref{eq:DM=Db}) seem out-of-reach, and the global structure of $\DM(\FF;\QQ)$ remains mysterious.
In the present article we will instead explore some consequences of such an equivalence.
More specifically, we will classify motives in different contexts assuming that~(\ref{eq:DM=Db}) holds.
(We refer to the discussion around~\Cref{big-conjecture} for the precise assumptions.)
Here is the first result in that direction (see \Cref{spc-rational}).
\begin{Thm*}
  \label{ThmA}
  Assume~(\ref{eq:DM=Db}). Then the tensor triangulated category $\DM(\FF;\QQ)$ is simple.
\end{Thm*}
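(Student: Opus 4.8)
The plan is to prove that the Balmer spectrum $\Spc(\DM(\FF;\QQ))$ is a single point. Under the assumption (and the surrounding motivic conjectures) $\DM(\FF;\QQ)$ is equivalent, as a tensor triangulated category, to $\Db(\MM(\FF;\QQ))$ with $\MM(\FF;\QQ)$ Tannakian over $\QQ$; in particular it is rigid. The first move is a reduction to finite type: write $\MM(\FF;\QQ)=\colim_i\cat{A}_i$, the filtered colimit of its finitely $\otimes$-generated Tannakian subcategories. This induces an equivalence $\Db(\MM(\FF;\QQ))\simeq\colim_i\Db(\cat{A}_i)$ of tensor triangulated categories, and since the spectrum turns filtered colimits into cofiltered limits (concretely: any prime $\otimes$-ideal of $\Db(\MM(\FF;\QQ))$ restricts to a compatible family of prime $\otimes$-ideals of the $\Db(\cat{A}_i)$ and is recovered from it), it suffices to show that each $\Spc(\Db(\cat{A}_i))$ is a single point.

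Here the crucial point is that $\cat{A}_i$, being the category of finite-dimensional representations of an affine group scheme of finite type over the characteristic-zero field $\QQ$ — or, if $\MM(\FF;\QQ)$ is not neutral, of an affine gerbe, which changes nothing — has \emph{finite global dimension}: its $\Ext$-groups vanish in degrees exceeding the dimension of the unipotent radical, the reductive part contributing none. Now pick a fiber functor $\omega_i\colon\cat{A}_i\to\mathrm{Vec}_{K_i}$ (for instance the restriction to $\cat{A}_i$ of an $\ell$-adic realization of $\MM(\FF;\QQ)$) and let $\Db(\omega_i)\colon\Db(\cat{A}_i)\to\Db(\mathrm{Vec}_{K_i})$ be the induced tensor triangulated functor; it is conservative because $\omega_i$ is faithful and exact. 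I claim it detects $\otimes$-nilpotence of morphisms. In the derived category of a field a morphism is $\otimes$-nilpotent only if it is zero (it splits as the sum of its cohomology components, and a tensor power of a nonzero $K_i$-linear map is nonzero); so if $\Db(\omega_i)(f)$ is $\otimes$-nilpotent then $\omega_i(H^n(f))=0$, hence $H^n(f)=0$, for all $n$, hence $f$ lies in the first stage $F^1$ of the truncation (``degree'') filtration on $\Hom_{\Db(\cat{A}_i)}$; that filtration is multiplicative for $\otimes$ and, by finite global dimension $d_i$, satisfies $F^{d_i+1}=0$, whence $f^{\otimes(d_i+1)}=0$. By Balmer's criterion for surjectivity of the map on spectra induced by a tensor triangulated functor, $\Spc(\Db(\omega_i))$ is then surjective; its source $\Spc(\Db(\mathrm{Vec}_{K_i}))$ is a point and $\Db(\cat{A}_i)$ is nonzero, so $\Spc(\Db(\cat{A}_i))$ is a point.

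The step I expect to need the most care is the first: identifying $\Db(\MM(\FF;\QQ))$ with $\colim_i\Db(\cat{A}_i)$ \emph{with the correct morphism groups}. A Tannakian subcategory need not be closed under extensions, so the individual functors $\Db(\cat{A}_i)\to\Db(\MM(\FF;\QQ))$ need not be full; one must check that every morphism of $\Db(\MM(\FF;\QQ))$, and every relation among morphisms, is already visible in some $\Db(\cat{A}_i)$ — which works because a morphism is a roof of finite complexes and cohomology and the calculus of fractions only involve passing to subobjects and quotients, operations internal to a Tannakian subcategory. This reduction is also where characteristic zero is indispensable: in positive characteristic representation categories of algebraic groups have infinite global dimension and $\Spc(\Db(\cat{A}_i))$ is genuinely larger than a point, which is exactly the phenomenon responsible for the richer classification with integral coefficients.
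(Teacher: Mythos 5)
Your proof is correct, and it takes a genuinely different route from the paper's. Both arguments reduce, via the same filtered-colimit lemma (you are right that this reduction needs care; the paper proves it carefully in its Lemma~2.5), to the case of an algebraic Tannakian category $\cA_i$ in characteristic zero, and both ultimately exploit finite cohomological dimension in characteristic zero. But they exploit it in substantially different ways.

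The paper passes to the ``big'' category $\Der(\Ind(\cA_i))\simeq\Der(\QC{\cX})$ for the gerbe $\cX$ associated to $\cA_i$, invokes finite cohomological dimension of $\cX$ to deduce (via Hall--Rydh and Drinfeld--Gaitsgory) that this category is rigidly compactly generated by $\Db(\cA_i)$, and then applies a formal result of Hall--Rydh \cite[Corollary~4.2]{hall-rydh:telescope-stacks}: a conservative coproduct-preserving tensor triangulated functor out of a rigidly-compactly-generated category injects thick tensor ideals of compacts. Your approach stays inside the small category $\Db(\cA_i)$: you use finite global dimension of $\cA_i$ directly, together with the ghost lemma (composites of $d+1$ ghost maps vanish when the global dimension is $d$) and multiplicativity of the ghost filtration under $\otimes$, to show that the derived fiber functor $\Db(\omega_i)$ detects $\otimes$-nilpotence of morphisms, whence $\Spc(\Db(\omega_i))$ is surjective by Balmer's criterion \cite{balmer:surjectivity}. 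Your route is more elementary in that it bypasses the big-category and stack machinery, at the cost of needing the ghost lemma in the derived category of an abelian category without enough injectives (one should really pass to $\Der(\Ind(\cA_i))$, where there are injective resolutions of finite length, and use full faithfulness of $\Db(\cA_i)\hookrightarrow\Der(\Ind(\cA_i))$); and it needs the precise form of Balmer's surjectivity criterion (Theorem~1.3 of \cite{balmer:surjectivity}), whereas the paper cites the softer Corollary~1.8 only at a different step. Your asserted bound on the global dimension — the dimension of the unipotent radical — is in the right spirit, though unnecessary: all one needs is finiteness, which in the non-neutral (gerbe) case is best justified by base change to a neutralizing extension, exactly as in the paper's appeal to finite cohomological dimension of $\cX$. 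Your worry about the fullness of $\colim_i\Db(\cA_i)\to\Db(\cA)$ is legitimate and is precisely what the paper addresses in its Lemma~2.5, by an explicit roof/homotopy argument for filtered colimits of exact categories.
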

\noindent Recall that a rigid tensor triangulated category is called simple if 0 is the only proper thick tensor ideal.
For example, if $\mathrm{Re}:\DM(\FF;\QQ)\to\Db(K)$ denotes some classical (co)homology theory, its kernel is a proper thick tensor ideal and therefore necessarily 0, by this result.
This consequence is known as the \emph{Conservativity Conjecture} (for the given (co)homology theory). 
\Cref{ThmA} says that \emph{every} non-trivial tensor triangulated functor $F:\DM(\FF;\QQ)\to\cT$ is conservative.
Another way of thinking about it is that starting with a non-zero motive $M$ in $\DM(\FF;\QQ)$, \emph{every} other motive can be constructed from $M$ using shifts, cones, direct summands, and tensoring with other motives.

It is expected that $\MM(\FF;\QQ)$ is a Tannakian category (not necessarily neutral), and this is part of our assumptions in \Cref{ThmA}.
Hence the latter will follow from the following one which we learned from David Rydh (see \Cref{tannaka-classification}).
\begin{Thm*}
  \label{ThmB}
  Let $\cA$ be a Tannakian category in characteristic zero.
  Then its bounded derived category $\Db(\cA)$ is simple.
\end{Thm*}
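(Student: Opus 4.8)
The plan is to show that every nonzero object $M$ of $\Db(\cA)$ generates the whole category as a thick $\otimes$-ideal, i.e.\ that $\unit\in\langle M\rangle$; equivalently, that no prime thick $\otimes$-ideal of $\Db(\cA)$ contains a nonzero object, i.e.\ that $\Spc(\Db(\cA))$ is a single point. Both reformulations are standard, and the content is entirely in producing $\unit$ from a given $M$.

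The case where $M$ lies in the heart $\cA$ (or is a shift of such) is where the characteristic-zero hypothesis enters. Since $\cA$ admits a fibre functor over some extension of the field $k:=\End(\unit)$, the categorical dimension $\dim X=\mathrm{tr}(\id_X)\in k$ of a nonzero object $X$ of $\cA$ is the dimension of a nonzero vector space, hence a positive integer, hence invertible in $k$. The composite $\unit\xrightarrow{\,\mathrm{coev}\,}X\otimes X^{\vee}\xrightarrow{\,\sim\,}X^{\vee}\otimes X\xrightarrow{\,\mathrm{ev}\,}\unit$ is multiplication by $\dim X$, so $\unit$ is a direct summand of $X\otimes X^{\vee}\in\langle X\rangle$; thus $\unit\in\langle X\rangle$, and likewise $\unit\in\langle X[n]\rangle$ for every $n$. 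So every nonzero object of the heart, and every shift of one, already generates $\Db(\cA)$.

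For a general bounded complex $M$, let $b$ be the top degree with $H^{b}(M)\neq0$. I would deduce $\unit\in\langle M\rangle$ from the previous paragraph by induction on the cohomological amplitude of $M$, the key point being $H^{b}(M)\in\langle M\rangle$ — equivalently, via the truncation triangle $\tau_{\le b-1}M\to M\to H^{b}(M)[-b]\to$, that $\tau_{\le b-1}M\in\langle M\rangle$, which has strictly smaller amplitude. Tensoring $M$ with $H^{b}(M)^{\vee}$ and using the dimension argument again, one arranges that the top cohomology object contains $\unit$ as a direct summand, and is then left to split off the corresponding copy of $\unit[-b]$ inside $M\otimes H^{b}(M)^{\vee}$; the obstruction to this splitting is a class in a group assembled from the groups $\Ext^{\ge2}_{\cA}(\unit,-)$. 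More conceptually, the crux is that every prime thick $\otimes$-ideal of $\Db(\cA)$ should be stable under the truncation functors of the canonical $t$-structure, so that a prime containing a nonzero $M$ would contain the nonzero heart object $H^{b}(M)$ and hence $\unit$, a contradiction.

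This compatibility of the $t$-structure with the tensor-triangular structure is the main obstacle, and I expect it to force one out of the setting of abstract Tannakian categories. The reduction I would run is: every object of $\Db(\cA)$ lies in $\Db(\cA_{0})$ for a Tannakian subcategory $\cA_{0}\subseteq\cA$ generated by finitely many objects, and such $\cA_{0}$ becomes neutral after a finite separable extension $k'/k$; since $\Db(\cA_{0})\otimes_{k}k'$ is the category of modules over the separable commutative algebra $k'\otimes_{k}\unit$ in $\Db(\cA_{0})$, Balmer's descent along separable algebras gives a surjection on spectra, so it suffices to treat $\cA=\mathrm{Rep}(G)$ with $G$ an affine algebraic group of finite type over a field of characteristic zero. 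There the Levi decomposition $G=L\ltimes R_{u}(G)$ exhibits $\mathrm{Rep}(G/R_{u}(G))=\mathrm{Rep}(L)$ as a semisimple category — this is exactly where characteristic zero bites — whence group cohomology $H^{>0}(G,-)$ is concentrated in degrees $\le\dim R_{u}(G)$; in particular $\Db(\mathrm{Rep}(L))$ is trivially simple, $\mathrm{Rep}(G)$ has finite global dimension, and $\Ext^{>0}_{\cA}(\unit,\unit)$ is a nilpotent ideal. It is this finiteness — feeding into the obstruction analysis above, after replacing $M$ by a sufficiently high tensor power so that the relevant products of positive-degree $\Ext$-classes are forced to vanish — that I expect to complete the induction. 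The formal parts (the reductions, the dimension argument, the manipulations with the $t$-structure) are routine; all the difficulty is in forcing the splitting obstruction to disappear.
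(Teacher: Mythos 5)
Your proposal correctly locates several of the real ingredients: the categorical‐dimension argument disposing of heart objects, the reduction to Tannakian subcategories generated by finitely many objects, and the observation that characteristic zero enters through finite cohomological dimension of the Tannakian gerbe / representation category. These all match the paper's approach. You also correctly identify the crux: given a nonzero complex $M$, one must somehow get a nonzero \emph{heart} object into the thick $\otimes$-ideal $\langle M\rangle$, and this is where the argument can no longer be elementary.

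However, your treatment of that crux is a genuine gap, not a routine verification. The splitting obstruction for peeling $\unit[-b]$ off $N:=M\otimes H^{b}(M)^{\vee}$ lives in $\Hom(\unit,\tau_{\le b-1}N[b+1])$, i.e.\ in $\Ext$ groups from $\unit$ into the \emph{lower} cohomology objects of $N$, not in $\Ext^{*}(\unit,\unit)$; nilpotence of the ring $\Ext^{>0}(\unit,\unit)$ does not control these, and the proposed device of ``replace $M$ by a high tensor power'' is never shown to annihilate them. Likewise, the conceptual reformulation — that prime thick $\otimes$-ideals of $\Db(\cA)$ are stable under the truncation functors of the standard $t$-structure — is not a consequence of any general principle; it is essentially equivalent to the theorem you are trying to prove, so invoking it is circular as stated. (Note also the cautionary example in the paper: $\filmod(\KK)$ is a simple exact $\otimes$-category whose $\Db$ is \emph{not} simple, so "the heart is simple" plus generic structural facts cannot suffice — something must single out the abelian Tannakian situation.) The paper sidesteps the obstruction calculus entirely: it derives the fiber functor on $\Ind(\cA)\simeq\QC{\cX}$ for the Tannakian gerbe $\cX$, uses finite cohomological dimension in characteristic zero to get that $\Der(\QC{\cX})$ is compactly generated by the perfect (= rigid) complexes, which are exactly $\Db(\cA)$, and then appeals to an abstract result of Hall--Rydh saying that a conservative, coproduct-preserving $\otimes$-triangulated functor from a rigidly compactly generated category induces an injection on sets of thick $\otimes$-ideals of compacts. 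This cleanly replaces the splitting argument you were trying to run, and it is exactly the step your proposal leaves as an unproven expectation.
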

\noindent This can be seen as a derived analogue of the easy observation that a Tannakian category in characteristic zero is simple.\footnote{In other words, 0 is the only proper thick tensor ideal, and where \emph{thick tensor ideals} can be interpreted in any reasonable sense. Whereas, in positive characteristic, a Tannakian category is simple only if thick tensor ideals are interpreted in a strong sense, \cf~\Cref{sec:tannaka}.}
As Tannakian categories appear frequently in algebraic geometry (and elsewhere), this result is of independent interest.
The proof of \Cref{ThmB} proceeds by translating the problem into one about quasi-coherent sheaves on well-behaved stacks where one can exploit finite cohomological dimension in characteristic zero~\cite{MR3037900,hall-rydh:compact-generation,hall-rydh:telescope-stacks}.

So far, our discussion has ignored any motivic torsion phenomena, and the first steps in the study of $\DM(\FF;R)$ for finite coefficients $R$ (as in~\cite{balmer-gallauer:tt-ratm} and~\cite{balmer-gallauer:artin-motives}) suggest a complicated picture indeed.
However, the situation becomes better when passing to \emph{\'etale} motives, as was observed in~\cite{gallauer:tt-dtm-algclosed}. 
Here we improve on that result, including an argument by Paul Balmer which allows to remove assumptions on roots of unity.
At the end of the day, we are able to give a complete classification of thick tensor ideals in the triangulated category of mixed \'etale motives over $\FF$ with integral coefficients, under the same assumptions as before.
The result is best expressed in terms of Balmer's tensor triangular spectrum (that is, the space of prime ideals) as follows (see \Cref{spc-etale-motives}).
\begin{Thm*}
  Let $\FF$ be a field of exponential characteristic $p$ and assume that $\DM(\FF;\QQ)$ is simple.
  The comparison map with respect to the Tate motive $\tilde{\GG}_m=\ZZ(1)[1]$ is a homeomorphism
  \begin{equation*}
    \rho^{\sbull}_{\tilde{\GG}_m}:\Spc(\DMgmet(\FF;\ZZ))\isoto\Spech(\KM_{\sbull}(\FF)[1/p])
  \end{equation*}
  onto the space (described in \Cref{structure-map-milnor}):
  \begin{equation}
    \label{eq:space}%
    \vcenter{%
      \xymatrix{
      & & & \mathfrak{m}_2  \ar@{-}[ld] \ar@{-}[rd] \ar@{-}[rrrd] & &\mathfrak{m}_3 \ar@{-}[rd] &\mathfrak{m}_5 \ar@{-}[d]&\cdots&\mathfrak{m}_\ell \\
      & &\mathfrak{p}_\alpha& ... &\mathfrak{p}_\beta& &\mathfrak{t}\ar@{-}[rur]\ar@{}[ru]|{\cdots}}
  }
\end{equation}
\end{Thm*}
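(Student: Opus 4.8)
\emph{Strategy.} The plan is to compute $\Spc(\DMgmet(\FF;\ZZ))$ by cutting it, along Balmer's comparison map, into a rational part and, for each prime $\ell\neq p$, a mod-$\ell$ part. First one fixes the target. As $\FF$ has exponential characteristic $p$, the \'etale sheaf $\mu_p$ vanishes, so $\tilde{\GG}_m\otimes\ZZ/p\simeq0$; since $\tilde{\GG}_m$ is $\otimes$-invertible this forces $p$ to act invertibly, hence $\DMgmet(\FF;\ZZ)=\DMgmet(\FF;\ZZ[1/p])$. Nesterenko--Suslin--Totaro together with the Beilinson--Lichtenbaum part of the norm residue theorem (and Geisser--Levine for the $p$-primary part) identify the graded central ring $\bigoplus_{n\in\ZZ}\Hom(\unit,\tilde{\GG}_m^{\otimes n})$ with $\KM_{\sbull}(\FF)[1/p]$, which is concentrated in non-negative degrees, so the comparison map $\rho:=\rho^{\sbull}_{\tilde{\GG}_m}\colon\Spc(\DMgmet(\FF;\ZZ))\to\Spech(\KM_{\sbull}(\FF)[1/p])$ is defined and continuous. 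Composing $\rho$ with $\Spech(\KM_{\sbull}(\FF)[1/p])\to\Spec\KM_0=\Spec\ZZ[1/p]$ sends a prime $\mathfrak{P}$ to $\{m:\unit/m\notin\mathfrak{P}\}$, so that $\rho^{-1}$ of the closed point $(\ell)$ is the Thomason-closed subset $Z_\ell:=\supp(\unit/\ell)$ and $\rho^{-1}$ of the generic point $(0)$ is the complementary (generization-closed) subset $U_{\QQ}$. Since $\unit/\ell\otimes\unit/\ell'\simeq0$ for $\ell\neq\ell'$, the $Z_\ell$ are pairwise disjoint and $\Spc(\DMgmet(\FF;\ZZ))=U_{\QQ}\sqcup\bigsqcup_{\ell\neq p}Z_\ell$ as a set.

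\emph{The rational and the mod-$\ell$ pieces.} By Balmer's finite-localization theorem $U_{\QQ}\cong\Spc\bigl(\DMgmet(\FF;\ZZ)/\langle\unit/\ell:\ell\neq p\rangle\bigr)\cong\Spc(\DMgmet(\FF;\QQ))=\Spc(\DM(\FF;\QQ))$ (\'etale and Nisnevich motives agree rationally, and $\Spc$ is insensitive to idempotent completion), which is a single point $\ast$ by the simplicity hypothesis; and since no class of positive degree is $\otimes$-invertible, $\rho(\ast)$ is the augmentation prime $\mathfrak{t}:=\KM_{\geq1}(\FF)[1/p]$, whose closure in the target equals $\{\mathfrak{t}\}\cup\{\mathfrak{m}_\ell\}_{\ell\neq p}$, $\mathfrak{m}_\ell:=\KM_{\geq1}(\FF)[1/p]+(\ell)$. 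For each $\ell\neq p$ the base-change functor identifies $Z_\ell$ with $\Spc(\DMgmet(\FF;\ZZ/\ell))$ compatibly with $\rho$; by rigidity (Suslin--Voevodsky, in its \'etale-motivic form) $\DMgmet(\FF;\ZZ/\ell)$ is the bounded derived category of constructible \'etale $\ZZ/\ell$-sheaves on $\Spec\FF$, its graded central ring is $\KM_{\sbull}(\FF)/\ell$ (Tate-twisted mod-$\ell$ Galois cohomology, via the norm residue theorem), and --- this is the improvement of \cite{gallauer:tt-dtm-algclosed}, now free of any hypothesis on roots of unity --- its comparison map is a homeomorphism onto $\Spech(\KM_{\sbull}(\FF)/\ell)$.

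\emph{Assembling.} The identifications above glue: over $(\ell)\in\Spec\ZZ[1/p]$ the map $\rho$ is the homeomorphism $Z_\ell\isoto\Spech(\KM_{\sbull}(\FF)/\ell)$, over $(0)$ it carries the single point $\ast$ to $\mathfrak{t}$, and the two strata are attached to each other through the closure relation $\mathfrak{m}_\ell\in\overline{\{\mathfrak{t}\}}$. Hence $\rho$ is a continuous bijection from $\Spc(\DMgmet(\FF;\ZZ))$ onto the subspace of $\Spech(\KM_{\sbull}(\FF)[1/p])$ depicted in \eqref{eq:space}, whose precise poset structure is then read off from that of each $\Spech(\KM_{\sbull}(\FF)/\ell)$ (see \Cref{structure-map-milnor}). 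To turn $\rho$ into a homeomorphism one checks it is a closed map --- equivalently, that the Thomason-closed subsets correspond on both sides --- again stratum by stratum, using that each $Z_\ell$ is clopen inside $\bigsqcup_{\ell'\neq p}Z_{\ell'}$, that the latter is Thomason in $\Spc(\DMgmet(\FF;\ZZ))$ with complement $\{\ast\}$, and the matching facts on the target side.

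\emph{Main obstacle.} The real work is the mod-$\ell$ computation, on two fronts. When $\FF$ contains no primitive $\ell$-th root of unity the Tate twist on $\DMgmet(\FF;\ZZ/\ell)$ is a non-trivial Galois character, so the spectral computation of \cite{gallauer:tt-dtm-algclosed} (which tacitly trivialises it) does not apply directly; this is precisely where Paul Balmer's argument is needed. And for $\ell=2$ the ring $\KM_{\sbull}(\FF)/2$ is the mod-$2$ Galois cohomology ring of $\FF$, whose homogeneous spectrum involves the space of orderings of $\FF$ --- via Pfister forms and the Arason--Elman--Jacob description of the higher powers of the fundamental ideal --- so that determining the fan-shaped configuration of the primes $\mathfrak{p}_\alpha$ below $\mathfrak{m}_2$, and their relation to $\mathfrak{t}$, is the delicate combinatorial heart of the matter. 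By comparison the reductions above and the final glueing, though not without bookkeeping about spectral topologies, are formal once these local pictures are established.
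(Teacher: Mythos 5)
Your overall strategy is the same as the paper's: fiber $\rho$ over $\Spec\ZZ[1/p]$, show the fiber over $(0)$ is a single point using simplicity of $\DM(\FF;\QQ)$, and show the fiber over each $(\ell)$ is identified with $\Spech(\KM_{\sbull}(\FF)/\ell)$ via rigidity and the roots-of-unity-free mod-$\ell$ computation (the paper's \Cref{etale-motives-finite-coefficients}). That establishes that $\rho$ is a continuous bijection, and up to this point your argument is sound.

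The gap is in the passage from bijection to homeomorphism, which you describe in the ``Assembling'' paragraph and dismiss in the final paragraph as ``formal once these local pictures are established.'' It is not. The poset structure of $\Spc(\DMlet(\FF;\ZZ))$ is \emph{not} determined by the poset structures of the individual fibers: the crucial relations are the cross-fiber specializations $\mathfrak{t}\rightsquigarrow\mathfrak{m}_\ell$, and a priori there is no reason the prime of $\Spc(\DMlet(\FF;\ZZ))$ lying over $\mathfrak{m}_\ell$ must lie in the closure of the prime over $\mathfrak{t}$. Concretely one must prove the inclusion of prime ideals $\ker(-\otimes\ZZ/\ell)\subseteq\ker(-\otimes\QQ)$ in $\DMlet(\FF;\ZZ_{\langle\ell\rangle})$, i.e.\ that a motive killed by $\ZZ/\ell$ already has vanishing rationalization. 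You assert the closure relation but never prove it. The paper's proof of this is not a topological bookkeeping exercise: it uses the $\ell$-adic realization $\hat\rho^*_\ell$ of Cisinski--D\'eglise, noting that $M\otimes\ZZ/\ell\simeq 0$ forces $\hat\rho^*_\ell M\simeq 0$ (by $\ell$-adic completeness), hence $M\otimes\QQ$ lies in the kernel of the $\QQ_\ell$-linear realization, which is trivial precisely because $\DMlet(\FF;\QQ)$ is simple. This is the second place the simplicity hypothesis enters the proof, and it is the step you are missing. Relatedly, your ``main obstacle'' paragraph misplaces the difficulty: the mod-$2$ Witt-theoretic structure of $\Spech(\KM_{\sbull}(\FF)/2)$ is an input (Thornton's computation), not something to re-derive, whereas the specialization lifting just described is the genuinely new work in the proof of the theorem.
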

Here, $\KM_{\sbull}(\FF)[1/p]$ is the graded Milnor K-theory algebra localized away from $p$.
The $\mathfrak{p}_\sbull$ are indexed by the orderings of $\FF$ with the Harrison topology, and $\ell$ runs through the rational primes different from $p$.
The lines indicate specialization relations pointing upward.
In particular, if $\FF$ does not admit any orderings (for example in positive characteristic) then the tensor triangular spectrum is just $\Spec(\ZZ[1/p])$.
We refer to \Cref{primes} for a description of the prime ideals in $\DMgmet(\FF;\ZZ)$.
We also recall that the thick tensor ideals then correspond to the Thomason subsets of the space in~(\ref{eq:space}) (that is, the unions of closed subsets with quasi-compact complements)~\cite[Theorem~4.10]{balmer:spectrum}.
In other words, up to the tensor triangulated structure, \'etale motives are classified by the Thomason subsets in~(\ref{eq:space}).

\subsection*{Acknowledgments}

We thank Joseph Ayoub, Paul Balmer, and David Rydh for instructive discussions which helped shape this article.
Thanks also to Denis-Charles Cisinski for valuable feedback on an earlier version, and to the anonymous referee for his/her suggestions for improving the exposition.

\section{Simplicity of (derived) Tannakian categories}
\label{sec:tannaka}
Let $\cA$ be a rigid exact $\otimes$-category.
(In other words, $\cA$ is endowed with an exact and a symmetric monoidal structure, the tensor product is exact in each variable separately, and each object is rigid, that is, has a tensor dual.)
A \emph{thick tensor ideal} of $\cA$ is a non-empty strictly full exact subcategory closed under retracts, extensions, and under tensoring with arbitrary objects in $\cA$.
Examples are kernels of exact tensor functors with domain $\cA$.
Let us then call $\cA$ \emph{simple} if $0$ is the only proper thick tensor ideal.
Put differently, $\cA$ is not the zero category, and the thick tensor ideal generated by any non-zero object contains the unit.

\begin{Rem}[Abelian tensor categories]
  When $\cA$ is an \emph{abelian} tensor category, stronger notions of ideals are equally possible, for example by requiring them to be, in addition, abelian or Serre subcategories.
  The former are called \emph{coherent tensor ideals} in~\cite{peter:spectrum-damt}, and the latter are called \emph{Serre (tensor) ideals} in~\cite{balmer-krause-stevenson:frame-smashing}.
  Again, any kernel of an exact tensor functor is an example of such subcategories.
  The notion introduced above is closer to the one used in tensor triangulated categories which we will focus on below.
\end{Rem}

The following basic observation is the origin of the results discussed in the present section.
Recall that for a field $\KK$, a rigid abelian tensor category with $\End(\unit)=\KK$ is called \emph{Tannakian} if it admits a fiber (= faithful exact $\KK$-linear tensor) functor with values in modules over a non-zero $\KK$-algebra.
The characteristic of the Tannakian category is the characteristic of $\KK$.
\begin{Lem}
  \label{tannakian-char0-simple}
  Let $\cA$ be a Tannakian category in characteristic zero.
  Then $\cA$ is simple.
\end{Lem}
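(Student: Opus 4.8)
The plan is to realise the unit object as a retract of $X\otimes X^\vee$ for every non-zero object $X\in\cA$. Granting this, the lemma is immediate: $\cA$ is non-zero since $\End_{\cA}(\unit)=\KK\neq 0$, and if $\cat{I}\subseteq\cA$ is a thick tensor ideal containing some $X\neq 0$, then $X\otimes X^\vee\in\cat{I}$ (closure under tensoring with arbitrary objects) and hence $\unit\in\cat{I}$ (closure under retracts), so $\cat{I}=\cA$. Thus $0$ is the only proper thick tensor ideal. (Note that closure under extensions is not used.)

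For the retract, recall that for a rigid object $X$ the standard composite
\[
\unit\xrightarrow{\ \mathrm{coev}\ }X\otimes X^\vee\xrightarrow{\ \sim\ }X^\vee\otimes X\xrightarrow{\ \mathrm{ev}\ }\unit
\]
equals multiplication by the categorical dimension $\dim(X)\in\End_{\cA}(\unit)=\KK$. Hence, if $\dim(X)$ is a non-zero --- equivalently invertible --- element of the field $\KK$, this composite is an isomorphism and splits $\unit$ off from $X\otimes X^\vee$. So everything reduces to the claim that $\dim(X)\neq 0$ whenever $X\neq 0$, which is where characteristic zero is used (and which fails in positive characteristic; compare the footnote).

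To prove that claim I would choose a fiber functor $\omega\colon\cA\to\Mod_R$ with $R$ a non-zero $\KK$-algebra, as provided by the definition of a Tannakian category. Since $\omega$ is faithful it reflects zero objects, so $\omega(X)\neq 0$; since $\omega$ is a $\KK$-linear tensor functor, it sends $\dim(X)$ to the categorical dimension of the dualizable --- hence finitely generated projective --- $R$-module $\omega(X)$, namely the rank of $\omega(X)$, which is an element of $R$. As $R\neq 0$ we have $\KK\hookrightarrow R$, so this rank is the image of the single scalar $\dim(X)$; evaluating at each point of $\Spec R$, where $\KK$ still injects into the residue field, shows that the rank is a constant integer $n\geq 0$ with $n=\dim(X)$ in $\KK$, and $n>0$ because $\omega(X)\neq 0$. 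Since $\car\KK=0$, the integer $n=\dim(X)$ is invertible in $\KK$, as required. I expect the only slightly delicate point to be this last computation over a possibly disconnected $R$; the remaining steps are formal.
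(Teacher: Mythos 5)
Your proof is correct, and it takes a genuinely different route from the paper. The paper first reduces to the algebraic case (so the fiber functor lands in modules over a finite field extension $\KK'/\KK$) and then exploits exterior powers: given $d=\dim_{\KK'}\omega(a)$, the object $\Lambda^d a$ is a direct summand of $a^{\otimes d}$ because in characteristic zero the antisymmetrizer idempotent exists, and $\Lambda^d a\otimes(\Lambda^d a)^*\cong\unit$ by faithfulness of $\omega$. You instead show directly that $\unit$ is a retract of $X\otimes X^\vee$, the key input being that the categorical dimension of a non-zero object of a Tannakian category in characteristic zero is a positive integer (hence invertible in $\KK$), which you prove by observing that a fiber functor carries $\dim(X)$ to the rank of the finitely generated projective module $\omega(X)$ and then evaluating at residue fields. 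Your version avoids the reduction to the algebraic case entirely and works with an arbitrary fiber functor with target $\Mod_R$, $R\neq 0$; the only mild payment is the pointwise-on-$\Spec R$ argument you flag. Note that the invertibility of $\dim(X)$ could alternatively be cited directly: it is a standard consequence of the theory that in a ($\QQ$-linear) Tannakian category every object has non-negative integer dimension, and faithfulness of $\omega$ gives positivity for $X\neq 0$. Both proofs isolate where characteristic zero enters (the antisymmetrizer in the paper's version; invertibility of the positive integer $\dim(X)$ in yours), and your observation that closure under extensions is never needed is accurate for both.
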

\begin{proof}
  The category $\cA$ is the filtered union of its algebraic Tannakian subcategories (that is, those generated by a single object).
  It is therefore sufficient to prove the statement for $\cA$ algebraic.
  In that case, there is a fiber functor $\omega:\cA\to\modules(\KK')$ with $\KK'/\KK$ a finite field extension~~\cite[Corollaire~6.20]{deligne:cat-tann}.

  Let $a\in\cA$ be a non-zero object.
  Then $\omega(a)$ is a $\KK'$-vector space of finite dimension, $d>0$, say.
  We claim that $\Lambda^da\otimes
  \left(
    \Lambda^da
  \right)^*$ belongs to the thick tensor ideal generated by $a$.
  (Here, $b^*$ denotes the tensor dual of $b$.)
  Indeed, it suffices to note that $a\potimes{d}$ does, and that $\Lambda^da$ is a direct summand of $a\potimes{d}$ since we are in characteristic zero.
  But since the canonical morphism $\Lambda^da\otimes
  \left(
    \Lambda^da
  \right)^*\to\unit$ becomes invertible after applying $\omega$, it must be invertible itself.
  We conclude that $\unit$ belongs to the thick tensor ideal generated by $a$ as required.
\end{proof}

\begin{Rem}[Positive characteristic]
  In particular, every exact tensor functor $F:\cA\to\cB$ with $\cA$ a Tannakian category in characteristic zero, and $\cB\neq 0$, is faithful.
  In fact, this latter conclusion is true if $\cA$ is a Tannakian category in positive characteristic as well.
  Indeed, the proof of the Lemma shows that the only Serre tensor ideals of $\cA$ are $0$ and $\cA$.

  On the other hand, it is not true in general that a Tannakian category in positive characteristic is simple in the sense introduced at the beginning of this section.
  For example, for a finite group $G$, the category of $G$-representations over a field of characteristic dividing the order of $G$ contains a non-trivial thick tensor ideal consisting of the projective objects.
\end{Rem}

The goal in the remainder of this section is to prove the following derived analogue of \Cref{tannakian-char0-simple}.

\begin{Thm}
  \label{tannaka-classification}%
  Let $\cA$ be a Tannakian category in characteristic zero.
  Then $\Db(\cA)$ is simple.
\end{Thm}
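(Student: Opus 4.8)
The plan is to follow the strategy indicated above: translate the simplicity of $\Db(\cA)$ into a statement about quasi-coherent sheaves on the Tannakian stack attached to $\cA$, where the characteristic zero hypothesis manifests itself as finite cohomological dimension. First I would reduce to the case that $\cA$ is algebraic, exactly as in the proof of \Cref{tannakian-char0-simple}: writing $\cA$ as the filtered union of its algebraic Tannakian subcategories $\cA_i$, which are closed under subquotients and extensions, every object and every morphism of $\Db(\cA)$ already lives in some $\Db(\cA_i)$; the $\cA_i$ are again Tannakian of characteristic zero, and the functors $\Db(\cA_i)\to\Db(\cA)$ are $\otimes$-triangulated, so if a nonzero $M\in\Db(\cA)$ generated a proper thick $\otimes$-ideal the same would hold of its preimage in a suitable $\Db(\cA_i)$.

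So assume $\cA$ algebraic. By Deligne's structure theory \cite{deligne:cat-tann} it is then of the form $\mathrm{Coh}(\cX)$ for an affine gerbe $\cX$ of finite type over $\Spec\KK$, and $\Db(\cA)\simeq\Db_{\mathrm{coh}}(\cX)$. Two consequences of characteristic zero enter here. First, every affine group scheme is smooth, so $\cX$ is smooth over $\KK$; pulling a coherent sheaf back along a smooth atlas $\Spec\KK'\to\cX$ makes it flat, hence of finite Tor-dimension, whence $\Db_{\mathrm{coh}}(\cX)=\mathrm{Perf}(\cX)$. Second, $\cX$ has finite cohomological dimension for quasi-coherent cohomology --- a reductive group has no higher cohomology, and a unipotent group in characteristic zero is an iterated extension of copies of $\GG_a$ --- so $\mathcal{O}_\cX$, and therefore every object of $\mathrm{Perf}(\cX)$, is compact in $\Der_{\mathrm{qc}}(\cX)$. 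Invoking the work of Hall--Rydh \cite{MR3037900,hall-rydh:compact-generation,hall-rydh:telescope-stacks} for the well-behaved stack $\cX$, one gets that $\Der_{\mathrm{qc}}(\cX)$ is compactly generated with $\Der_{\mathrm{qc}}(\cX)^{\mathrm{c}}=\mathrm{Perf}(\cX)\simeq\Db(\cA)$, and --- again thanks to finite cohomological dimension --- that the telescope conjecture holds for $\cX$.

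It then remains to compute the spectrum of $\mathrm{Perf}(\cX)$. Combining compact generation and the telescope conjecture with Balmer's reconstruction \cite{balmer:spectrum}, the thick $\otimes$-ideals of $\mathrm{Perf}(\cX)$ are classified by the Thomason subsets --- here, since $\cX$ is Noetherian, the specialization-closed subsets --- of the underlying topological space $|\cX|$. But $\cX$ is a gerbe over $\Spec\KK$, so $|\cX|$ is a single point; hence the only thick $\otimes$-ideals of $\mathrm{Perf}(\cX)\simeq\Db(\cA)$ are $0$ and the whole category, which is the assertion.

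I expect the main obstacle to be the last step: one has to know that $\Der_{\mathrm{qc}}(\cX)$ is well-behaved enough --- compact generation and the telescope conjecture --- that the classification of thick $\otimes$-ideals reduces to the topology of $|\cX|$, and this is precisely where characteristic zero cannot be dispensed with. In characteristic $p$ the classifying stack of a unipotent group has infinite cohomological dimension, $\Der_{\mathrm{qc}}(BG)$ is no longer compactly generated in the relevant sense, and $\Db(\mathrm{Rep}_{\FF_p}G)$ genuinely has a nontrivial spectrum --- so the statement is false as soon as one leaves characteristic zero.
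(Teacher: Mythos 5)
Your strategy matches the paper's up to the final step---reduce to $\cA$ algebraic, interpret via a Tannakian gerbe $\cX$, exploit smoothness and finite cohomological dimension in characteristic zero to get compact generation of $\D_{\mathrm{qc}}(\cX)$ by perfects---but the concluding step has a genuine gap. You assert that compact generation, the telescope conjecture, and Balmer's reconstruction together imply that the thick $\otimes$-ideals of $\mathrm{Perf}(\cX)$ are classified by Thomason subsets of the underlying space $|\cX|$. What Balmer's reconstruction theorem actually gives is a classification by Thomason subsets of $\Spc(\mathrm{Perf}(\cX))$; it says nothing about identifying this spectrum with $|\cX|$. There is a natural comparison map $\Spc(\mathrm{Perf}(\cX))\to|\cX|$, but it is far from a bijection in general: for $\cX=B(\ZZ/p)$ over $\FF_p$ it collapses a positive-dimensional spectrum (the homogeneous spectrum of group cohomology) to a point, while $|\cX|$ is already a point. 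Establishing that this map \emph{is} a bijection for a Tannakian gerbe in characteristic zero is essentially the content of the theorem you are trying to prove, so as written the argument is not self-contained; none of the results you cite yield that spectral identification formally.

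The paper sidesteps this entirely by not computing any spectrum. It takes the atlas $\Spec\KK'\to\cX$, whence a $\otimes$-triangulated functor $\omega:\Db(\cA)\simeq\mathrm{Perf}(\cX)\to\Db(\KK')$, and invokes~\cite[Corollary~4.2]{hall-rydh:telescope-stacks}: given rigid compact generation and the hypotheses supplied by finite cohomological dimension, $\omega$ induces an \emph{injective} map from the poset of thick $\otimes$-ideals of $\Db(\cA)$ to that of $\Db(\KK')$. Since $\Db(\KK')$ is the derived category of a field and trivially has only two thick $\otimes$-ideals, $\Db(\cA)$ has at most two, and since $\unit\neq 0$ it has exactly two. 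This descent argument is strictly weaker than a spectral reconstruction for the gerbe and is what you should appeal to; your setup already has all the ingredients (compact generation, telescope) that the Hall--Rydh corollary requires. Your reduction to the algebraic case is in the spirit of the paper's but is only sketched; the paper makes it precise via an explicit lemma on filtered colimits of bounded derived categories.
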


Here, a \emph{thick tensor ideal} of a tensor triangulated category is a non-empty full subcategory closed under retracts, shifts, cones, and tensoring with any object in the category.
Again, a rigid tensor triangulated category is \emph{simple} if the only proper thick tensor ideal is the zero ideal.

\begin{Lem}
  In proving \Cref{tannaka-classification} we may assume that $\cA$ is an \emph{algebraic} Tannakian category, that is, $\cA$ is generated (as a Tannakian subcategory) by a single object.
\end{Lem}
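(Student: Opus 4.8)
The plan is to exploit the description of $\cA$ as the filtered union of its algebraic Tannakian subcategories, already used in the proof of \Cref{tannakian-char0-simple}. The Tannakian subcategories generated by a single object form a \emph{directed} system, since the subcategories generated by $a$ and by $b$ are both contained in the algebraic one generated by $a\oplus b$; and every object of $\cA$ lies in one of them. Granting \Cref{tannaka-classification} for every algebraic Tannakian category, I want to deduce it for $\cA$.

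So let $X\in\Db(\cA)$ be non-zero, and write $\langle X\rangle$ for the thick tensor ideal it generates in $\Db(\cA)$; I must show $\unit\in\langle X\rangle$. Pick a bounded representative of $X$ with terms $X^i\in\cA$, only finitely many of which are non-zero. By the remarks above there is an algebraic Tannakian subcategory $\cA_0\subseteq\cA$ containing all of them, and since $\cA_0$ is full the chosen complex is literally a complex in $\cA_0$. Now $\cA_0$ is closed under subquotients and finite direct sums, hence is abelian with exact inclusion $\iota\colon\cA_0\hookrightarrow\cA$, and $\iota$ is moreover a fully faithful $\KK$-linear tensor functor; as the tensor product is exact in each variable, $\iota$ induces a tensor triangulated functor $\Db(\iota)\colon\Db(\cA_0)\to\Db(\cA)$. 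By construction the complex viewed in $\cA_0$ gives an object $X_0\in\Db(\cA_0)$ with $\Db(\iota)(X_0)=X$; in particular $X_0\neq0$. Applying \Cref{tannaka-classification} to the algebraic category $\cA_0$, we get $\unit\in\langle X_0\rangle$ inside $\Db(\cA_0)$.

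Finally I would transport this along $\Db(\iota)$ using the elementary functoriality of generated ideals: for any tensor triangulated functor $F\colon\cT\to\cT'$ and any $Y\in\cT$, the preimage $F\inv\big(\langle F(Y)\rangle\big)$ is a thick tensor ideal of $\cT$ containing $Y$, hence containing $\langle Y\rangle$, so $F\big(\langle Y\rangle\big)\subseteq\langle F(Y)\rangle$. With $F=\Db(\iota)$ and $Y=X_0$, and since $\Db(\iota)(\unit)=\unit$, this gives $\unit\in\langle X\rangle$, completing the reduction. The only step demanding any care is the passage from an arbitrary bounded complex over $\cA$ to one defined over a \emph{single} algebraic Tannakian subcategory — where the finiteness of the complex and the directedness of the system enter — and it is worth stressing that this argument uses only the functoriality of $\langle-\rangle$, not the (genuinely more delicate, and here unnecessary) full faithfulness of $\Db(\iota)$.
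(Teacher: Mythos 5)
Your proof is correct, and it takes a genuinely more economical route than the paper's. The paper first proves a separate lemma establishing the full equivalence $\varinjlim_i\Db(\cA_i)\isoto\Db(\cA)$ (which requires checking essential surjectivity, fullness, \emph{and} faithfulness of the comparison, the latter two being the non-trivial parts involving roofs, homotopies, and acyclic cones), and then invokes the general fact that a filtered colimit of simple tensor triangulated categories is simple. Your argument bypasses all of this: you only need the easy ``essential surjectivity'' direction — that a bounded complex in $\cA$ lives in some algebraic Tannakian subcategory $\cA_0$ — together with the elementary functoriality $\Db(\iota)\bigl(\langle X_0\rangle\bigr)\subseteq\langle\Db(\iota)(X_0)\rangle$. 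Your closing remark correctly identifies the efficiency gain: you never use fullness or faithfulness of $\Db(\iota)$, since the needed implication ($X\neq 0\Rightarrow X_0\neq 0$) is just the contrapositive of ``additive functors preserve zero,'' which is free. The paper's longer route does have the side benefit of producing the reusable filtered-colimit lemma for derived categories, which is of some independent interest, but for the purpose of this reduction your argument is both complete and leaner.
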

\begin{proof}
  The category $\cA$ is the filtered union of algebraic Tannakian subcategories $\cA_i$.
  It then follows from \Cref{Db-filtered-colimit} below that $\Db(\cA)$ is the filtered colimit of the $\Db(\cA_i)$.
  The filtered colimit of simple tensor triangulated categories is clearly simple (or one may use~\cite[Proposition~8.5]{gallauer:tt-fmod}).
\end{proof}
\begin{Lem}
  \label{Db-filtered-colimit}%
  Let $\cA$ be the filtered colimit of exact categories $\cA_i$ (with exact functors between them). 
  This induces an equivalence of triangulated categories
  \begin{equation}
    \label{eq:filtered-colimit-Db}
    \varinjlim_i\Db(\cA_i)\isoto\Db(\cA).\footnote{This direct limit can be interpreted as a pseudo-colimit in the 2-category of small triangulated categories, exact functors, and exact isotransformations. But the content of this is that there is an evident way of endowing the filtered colimit of categories with a triangulated structure if each category is triangulated and each functor is exact. Cf.~\cite[Remark~8.3]{gallauer:tt-fmod}}
  \end{equation}
\end{Lem}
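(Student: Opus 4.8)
The plan is to reduce the asserted equivalence to the two elementary facts that the bounded homotopy category and the Verdier quotient both commute with filtered colimits. First I would recall the explicit description of $\cA=\varinjlim_i\cA_i$: its underlying additive category is the filtered colimit of the underlying additive categories of the $\cA_i$, and a short sequence in $\cA$ is declared a conflation precisely when it is isomorphic to the image of a conflation in some $\cA_i$. Since the index category is filtered, this is readily checked to define an exact structure (pullbacks and pushouts of admissible squares, composability, etc.\ being obtained by passing to a common level), and each structure functor $\cA_i\to\cA$ is then exact. Consequently the derived functors $\Db(\cA_i)\to\Db(\cA)$ form a filtered system of exact functors and induce a comparison functor $\Phi\colon\varinjlim_i\Db(\cA_i)\to\Db(\cA)$, the source carrying the evident triangulated structure (\cf~\cite[Remark~8.3]{gallauer:tt-fmod}); it remains to show that $\Phi$ is an equivalence.

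The first step is the identification $\varinjlim_i\Kb(\cA_i)\isoto\Kb(\cA)$ of triangulated categories. This holds because a bounded complex in $\cA$ involves only finitely many objects and differentials and therefore, by filteredness, already lives in some $\cA_i$; likewise a morphism of bounded complexes, a null-homotopy, and the mapping cone of a morphism are finite data realized at some finite level. The second step matches the acyclics: writing $\cat{N}_i\subseteq\Kb(\cA_i)$ and $\cat{N}\subseteq\Kb(\cA)$ for the subcategories of bounded acyclic complexes, so that $\Db(\cA_i)=\Kb(\cA_i)/\cat{N}_i$ and $\Db(\cA)=\Kb(\cA)/\cat{N}$ by definition, I claim that under the identification above $\cat{N}$ corresponds to $\varinjlim_i\cat{N}_i$. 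For ``$\supseteq$'' one notes that an exact functor carries acyclic complexes to acyclic complexes, so each $\cat{N}_i$ maps into $\cat{N}$. For ``$\subseteq$'', a bounded acyclic complex of $\cA$ is witnessed acyclic by finitely many conflations of $\cA$, each coming from some $\cA_i$; by filteredness all of these, together with the complex itself, are defined at a common level $i$, so the complex lies in the image of $\cat{N}_i$. (If one prefers to define $\cat{N}$ as the thick closure of the conflation complexes, the same filteredness argument shows that the full subcategory of objects isomorphic to the image of some object of $\cat{N}_i$ is thick and contains every conflation complex, hence equals $\cat{N}$.)

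Finally, the Verdier quotient commutes with filtered colimits --- localization being a colimit-type construction, $\varinjlim_i\bigl(\Kb(\cA_i)/\cat{N}_i\bigr)$ inherits the universal property of $\bigl(\varinjlim_i\Kb(\cA_i)\bigr)\big/\bigl(\varinjlim_i\cat{N}_i\bigr)$ --- so, combining the two steps,
\[
  \varinjlim_i\Db(\cA_i)=\varinjlim_i\bigl(\Kb(\cA_i)/\cat{N}_i\bigr)\isoto\bigl(\varinjlim_i\Kb(\cA_i)\bigr)\big/\bigl(\varinjlim_i\cat{N}_i\bigr)\isoto\Kb(\cA)/\cat{N}=\Db(\cA),
\]
and one checks that this composite is the functor $\Phi$, proving it an equivalence. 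I expect the only delicate point to be the second step, the matching of the acyclic subcategories --- in particular keeping the meaning of ``acyclic'' consistent on both sides --- while the reduction in the first paragraph and the colimit manipulations are routine (the $2$-categorical bookkeeping behind the filtered colimit of triangulated categories being as in the cited remark).
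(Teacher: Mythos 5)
Your proposal is correct and rests on the same filteredness and finiteness observations as the paper's proof, but packages the passage from $\Kb$ to $\Db$ somewhat differently. After establishing $\varinjlim_i\Kb(\cA_i)\isoto\Kb(\cA)$ (which you and the paper do identically), the paper verifies essential surjectivity, fullness, and conservativity of the comparison functor directly by unwinding the calculus of fractions: a morphism in $\Db(\cA)$ is a roof whose backwards leg has cone isomorphic to an acyclic complex, and the roof, the isomorphism, and the acyclicity witnesses each descend to a finite level; conservativity is handled the same way. You instead isolate the matching of acyclic subcategories as a separate claim and then invoke the general principle that Verdier localization commutes with filtered pseudo-colimits of triangulated categories. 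The two routes are closely related --- proving your abstract commutation from scratch amounts to the paper's roof argument --- so yours buys a cleaner statement at the cost of an appeal to a fact that should be cited or re-derived. One point to handle carefully in your second step: fix whether you invert the acyclic complexes themselves or their thick closure, and carry that convention uniformly through the $\cA_i$ and $\cA$. In the thick variant your parenthetical is on the right track, and the key observation is that a retraction defined in $\Kb(\cA)$ already descends to some $\Kb(\cA_j)$, after which one applies thickness of the subcategory at that finite level; one should not expect idempotents arising in the colimit to split there, so the argument really must happen at a finite stage.
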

\begin{proof}
  We first treat the case where all $\cA_i$ (and therefore also $\cA$) are endowed with the \emph{split} exact structure.
  In other words, we want to prove the statement for the bounded homotopy categories:
  \begin{equation}
    \varinjlim_i\Kb(\cA_i)\isoto\Kb(\cA)\label{eq:filtered-colimit-Kb}
  \end{equation}
  The finitely many objects and morphisms making up a bounded complex in $\cA$ all live in some $\cA_i$, and define an object of $\Kb(\cA_i)$.
  Thus essential surjectivity of~(\ref{eq:filtered-colimit-Kb}).
  Now fix two bounded complexes $X,Y\in\Kb(\cA_{i_0})$.
  Given a morphism $f:X\to Y$ in $\Kb(\cA)$, its components $f_n:X_n\to Y_n$ must lie in some $\cA_{i_1}$ for $i_1\geq i_0$.
  The fact that these components define a morphism of complexes in $\cA$ translates into an identity of certain morphisms.
  There are finitely many such identities, and therefore these hold already in some $\cA_{i_2}$ with $i_2\geq i_1$.
  This proves fullness of~(\ref{eq:filtered-colimit-Kb}).
  With $X$ and $Y$ as above, let us be given a morphism  $f:X\to Y$ in $\Kb(\cA_{i_0})$ which becomes zero in $\Kb(\cA)$.
  In other words, there is a homotopy between $f$ and the zero morphism.
  Again, this homotopy consists of finitely many morphisms satisfying finitely many identities and therefore already exists in some $\cA_{i_1}$ with $i_1\geq i_0$.
  This proves faithfulness of~(\ref{eq:filtered-colimit-Kb}).

  Recall that $\Db$ is a localization of $\Kb$.
  Essential surjectivity of~(\ref{eq:filtered-colimit-Db}) therefore follows immediately from the split exact case.
  Now fix $X,Y\in\Db(\cA_{i_0})$.
  A morphism $X\to Y$ in $\Db(\cA)$ consists of a roof $X\leftarrow X'\to Y$ in $\Kb(\cA)$ where the first arrow is a quasi-isomorphism (that is, its cone is isomorphic to an acyclic complex~\cite[\S\,11]{keller:derived-categories}).
  By the previous case we already know that this roof exists in some $\Kb(\cA_{i_1})$ with $i_1\geq i_0$.
  Let $C$ be the cone of the first arrow in $\Kb(\cA_{i_1})$.
  We know that $C$ becomes isomorphic to an acyclic complex $C'$ in $\Kb(\cA)$ so $C'$ and this isomorphism already exist in some $\Kb(\cA_{i_2})$ with $i_2\geq i_1$.
  Finally, acyclicity of $C'$ already holds in some $\Kb(\cA_{i_3})$ with $i_3\geq i_2$, and we deduce that~(\ref{eq:filtered-colimit-Db}) is full.
  We will complete the proof by showing that~(\ref{eq:filtered-colimit-Db}) is conservative.
  (Recall that a full exact functor between triangulated categories is faithful if and only if it is conservative.)
  For this, let $X\in\Kb(\cA_{i_0})$ be a complex which becomes isomorphic to an acyclic complex $X'$ in $\Kb(\cA)$.
  As in the proof of fullness, this already happens in some $\Kb(\cA_{i_1})$ with $i_1\geq i_0$, and $X$ is already 0 in the domain of~(\ref{eq:filtered-colimit-Db}).
\end{proof}

For the proof of \Cref{tannaka-classification} we will therefore assume that $\cA$ is algebraic.
Let $\KK=\End_{\cA}(\unit)$ denote the field of characteristic zero over which $\cA$ is defined.
As in the proof of \Cref{tannakian-char0-simple}, algebraicity forces the existence of a fiber functor $\omega:\cA\to\modules(\KK')$ over a finite extension $\KK'/\KK$~\cite[Corollaire~6.20]{deligne:cat-tann}.
\begin{proof}[Proof of \Cref{tannaka-classification}]
  We continue to denote by $\omega$ the (essentially unique) cocontinuous extension
  \[
    \Ind(\cA)\to\Ind(\modules(\KK'))\simeq\Mod(\KK'),
  \]
  which is a faithful, exact $\otimes$-functor.
  (This is easy to prove directly but also follows from the interpretation in terms of stacks below.)
  It therefore derives trivially to a conservative tensor triangulated functor preserving coproducts
  \[
    \omega:\Der(\Ind(\cA))\to\Der(\KK').
  \]
  We will prove below that $\Der(\Ind(\cA))$ is rigidly compactly generated by $\Db(\cA)$ (that is, $\Der(\Ind(\cA))$ is compactly generated, $\Db(\cA)$ is the full subcategory of compact objects, and these are moreover rigid).
  It is then a formal consequence that $\omega$ induces an injective map from the set of thick tensor ideals in $\Db(\cA)$ to those in $\Db(\KK')$~\cite[Corollary~4.2]{hall-rydh:telescope-stacks}.
  As the latter category is simple so is the former, as required.

  It remains to prove that $\Der(\Ind(\cA))$ is rigidly compactly generated by $\Db(\cA)$, and it will be convenient to do this using the language of algebraic stacks.
  The Tannakian dual $\uAut(\omega)$ is an affine algebraic $\KK$-groupoid acting on $\Spec(\KK')$.
  Let $\cX$ be the algebraic stack (a gerbe) it presents.
Recall that, by~\cite[Th\'eor\`eme~1.12, Corollaire~3.9]{deligne:cat-tann}, $\omega$ induces an equivalence
\[
  \Ind(\cA)\simeq\QC{\cX}
\]
with the category of quasi-coherent sheaves on $\cX$.
The characteristic zero assumption buys us finite cohomological dimension of $\cX$, which translates into compact generation of $\Der(\QC{\cX})$ by the perfect (or equivalently, rigid) objects~\cite[Remark~1.2.10, Theorem~1.4.2, Corollary~1.4.3]{MR3037900} and \cite[Corollary~3.5]{hall-rydh:compact-generation}.
The latter span precisely the image of the fully faithful functor
  \[
    \Db(\cA)\to\Der(\Ind(\cA))\simeq\Der(\QC{\cX})
  \]
  and we conclude.
\end{proof}

\begin{Rem}[Underived and derived simplicity]
  \label{deriving-simplicity}%
  If $\cA$ is a simple exact tensor category, it is not necessarily the case that $\Db(\cA)$ is simple.
  For example, let $\KK$ be a field and consider the category $\filmod(\KK)$ of finite dimensional $\KK$-vector spaces with an exhaustive and separated filtration.
  It is a quasi-abelian category, and therefore an exact category in which the admissible epimorphisms (resp.\ admissible monomorphisms) are the strict surjections (resp.\ strict injections)~\cite[\S\,2]{gallauer:tt-fmod}.
  Together with the tensor product of filtered vector spaces, the exact tensor category $\filmod(\KK)$ is easily seen to be simple.
  On the other hand, it is proven in~\cite[Proposition~7.8]{gallauer:tt-fmod} that its derived category $\Db(\filmod(\KK))$ is not simple.
  Indeed, a (unique) non-trivial proper thick tensor ideal is generated by the cone of the morphism $\beta:\unit\to\unit(1)$, which is the identity on the underlying one-dimensional vector space, placed in filtration degree 0 (resp.\ 1) in the domain (resp.\ codomain).

  We thank Paul Ziegler for suggesting this example.
\end{Rem}

\section{Simplicity of mixed motives}
\label{sec:DMQ}

Let $\FF$ be a field.
We are interested in classifying motives over $\FF$, and the previous section already provides us with some examples where the category of motives (conjecturally) is simple.
The goal in this section is to discuss these examples and the conjectures involved.

\begin{Rem}[Simplicity of homological motives]
  \label{nori-motives}%
  Let $\FF\subset \CC$ be a field of characteristic zero.
  Nori~\cite{nori-lectures} constructed a Tannakian category of ``homological'' motives $\HM(\FF;\QQ)$ with fiber functor the Betti realization
  \[
    \bti:\HM(\FF;\QQ)\to\modules(\QQ).
  \]
  We conclude from \Cref{tannakian-char0-simple} that $\HM(\FF;\QQ)$ is simple.
\end{Rem}

Let us now turn to the derived setting.
That is, we work with the \emph{triangulated category of mixed motives} (resp.\ \emph{triangulated category of mixed \'etale motives}) $\DMbig(\FF)$ (resp.\ $\DMbiget(\FF)$).
The reader is referred to~\cite[\S\,3, 4]{gallauer:tt-dtm-algclosed} for a brief summary of these theories which contains all that is necessary for the sequel.
In particular, we recall that these two theories coincide with rational coefficients but are different in the presence of torsion.

The following hypothesis will play a key role in the sequel.
\begin{Hyp}
  \label{big-conjecture}%
  There is a field extension $K/\QQ$ and an equivalence of tensor triangulated categories
  \[
    \DM(\FF;K)\simeq \Db(\cA)
  \]
  where $\cA$ is a Tannakian category (necessarily of characteristic zero).
\end{Hyp}

We offer the following remarks regarding this assumption.
In any case, it should be pointed out that \Cref{big-conjecture} is known to imply many motivic conjectures, including in characteristic zero, Grothendieck's Standard conjectures and Beilinson-Soul\'e's Vanishing conjecture.

\begin{Rem}[Comparison with homological motives]
  \label{conjecture-char-0}%
  Assume $\FF\subset\CC$ is a characteristic zero field so that we have the Betti realization $\bti:\DM(\FF;\QQ)\to\Db(\QQ)$ at our disposal.
  We then expect $\cA$ to be a $\QQ$-linear Tannakian category neutralized by the functor induced on hearts, $\bti^{\heartsuit}:\cA\to\modules(\QQ)$.
  
  In fact, a more natural conjecture in this context seems to be the following.
  In addition to the category $\HM(\FF;\QQ)$ of homological motives (\Cref{nori-motives}), Nori constructed a tensor triangulated functor
  \begin{equation}
    \DM(\FF;\QQ)\to\Db(\HM(\FF;\QQ))\label{eq:nori}
  \end{equation}
  compatible with the Betti realizations. (We refer to~\cite[\S\,7]{choudhury-gallauer:iso-galois}, \cite[\S\,10.1]{huber-mueller:periods-nori}, \cite[\S\,7]{harrer:phd} for details.)
  One may then conjecture that this functor~(\ref{eq:nori}) is an equivalence.
\end{Rem}

\begin{Rem}[Ayoub's conjectures]
  \label{ayoub-conjectures}
  A strategy for breaking this conjecture into hopefully easier ones is suggested by Ayoub in terms of (derived) motivic Galois groups~\cite[Introduction]{ayoub:mgg-model}.
  More specifically, assume $\FF\subset\CC$ is of characteristic zero.
  The Betti realization $\bti:\DMbig(\FF;\QQ)\to\D(\QQ)$ admits a right adjoint, and Ayoub constructs an enhancement (on the level of model categories) of the associated Betti monad ${\mathcal B}$.
  It is used to construct an enhanced motivic Hopf algebra ${\mathcal H}_{\FF}$, and Ayoub
  formulates two conjectures~\cite[\S\,2.4]{ayoub:galois1}:
  \begin{enumerate}[label=(\Alph*)]
  \item The homology of $\mathcal{H}_{\FF}$ is concentrated in degree 0.\label{conjecture-A}
  \item For compact motives $M$, the canonical morphism $M\to\holim_n\mathcal{B}^{\circ(n+1)}M$ is an isomorphism.\label{conjecture-B}
  \end{enumerate}
  The two conjectures together should imply that the functor in~(\ref{eq:nori}) is an equivalence.\footnote{More precisely, they do assuming the tedious task of checking that a certain diagram commutes~\cite[p.~1508]{ayoub:mgg-model}.}
\end{Rem}

\begin{Rem}[Ayoub, personal communication]
  Let $\FF$ now be an arbitrary field, and $\ell$ a rational prime invertible in $\FF$.
  Replacing the Betti realization by the $\ell$-adic realization $\ladic:\DMbig(\FF;\QQ_\ell)\to\D(\QQ_\ell)$, one may try to follow the steps in \Cref{ayoub-conjectures} to obtain an $\ell$-adic monad $\mathcal{L}$, construct an associated motivic Hopf algebra $\mathcal{H}_{\FF}$ and formulate the two conjectures analogous to \ref{conjecture-A} and \ref{conjecture-B}.
  Assuming these, one would then hope to obtain a canonical equivalence $\DM(\FF;\QQ_\ell)\isoto\Db(\comod(\Hm_0(\mathcal{H}_{\FF})))$.
\end{Rem}

\begin{Rem}[Non-neutral Tannakian category of motives]
  Assume $\FF$ is a finite field.
  If \Cref{big-conjecture} holds for $\FF$ and $K/\QQ$ then either $K$ is a proper extension, or the Tannakian category $\cA$ is not neutral~\cite[p.~146]{deligne:motifs}.
  We refer to~\cite{milne:finite-fields} for a discussion of (mostly pure) motives in this context.
\end{Rem}

The following observation follows immediately from the discussion in the previous section.
\begin{Thm}
  \label{spc-rational}%
  Assume \Cref{big-conjecture}.
  Then $\DM(\FF;\QQ)$ is simple.
  In particular,
  \begin{itemize}
  \item every non-zero motive generates $\DM(\FF;\QQ)$ as a thick tensor ideal;
  \item every non-zero tensor triangulated functor defined on $\DM(\FF;\QQ)$ is conservative.
  \end{itemize}
\end{Thm}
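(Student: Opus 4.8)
The plan is to reduce the theorem to \Cref{tannaka-classification} by extension of scalars along $\QQ\hookrightarrow K$. First, \Cref{big-conjecture} provides a field extension $K/\QQ$ together with a tensor triangulated equivalence $\DM(\FF;K)\simeq\Db(\cA)$ with $\cA$ Tannakian of characteristic zero; hence \Cref{tannaka-classification} shows that $\DM(\FF;K)$ is simple. It remains to transport simplicity down from $K$-coefficients to $\QQ$-coefficients.

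For this I would consider the extension-of-scalars functor $\rho\colon\DM(\FF;\QQ)\to\DM(\FF;K)$, $M\mapsto M\otimes_{\QQ}K$, which is tensor triangulated. It is conservative: $\QQ\to K$ is faithfully flat, so testing on the compact generators $M(X)(i)[j]$ (with $X$ smooth) and using $\Hom_{\DM(\FF;K)}(M(X)(i)[j],\rho M)\cong\Hom_{\DM(\FF;\QQ)}(M(X)(i)[j],M)\otimes_{\QQ}K$, one sees that $\rho M=0$ forces $M=0$. A conservative tensor triangulated functor detects $\otimes$-nilpotence of objects (from $\rho(M)^{\otimes n}=\rho(M^{\otimes n})=0$ one gets $M^{\otimes n}=0$), so by Balmer's surjectivity criterion the induced map $\Spc(\rho)\colon\Spc(\DM(\FF;K))\to\Spc(\DM(\FF;\QQ))$ is surjective.

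Now the conclusion. Since $\DM(\FF;\QQ)$ is rigid, it is simple if and only if its Balmer spectrum is a single point. Simplicity of $\DM(\FF;K)$ says $\Spc(\DM(\FF;K))$ is a point; as $\DM(\FF;\QQ)$ is non-zero, $\Spc(\DM(\FF;\QQ))$ is non-empty, and it coincides with the image of that point under the surjection $\Spc(\rho)$, so it too is a single point; hence $\DM(\FF;\QQ)$ is simple. (Equivalently and more concretely: given a non-zero thick tensor ideal $\cat{I}$ and $0\neq M\in\cat{I}$, the thick tensor ideal $\langle\rho M\rangle$ is non-zero, hence all of $\DM(\FF;K)$, so $\unit\in\langle\rho M\rangle$; applying the right adjoint $\rho_{*}$ on the ambient compactly generated categories $\DMbig(\FF;-)$, the projection formula $\rho_{*}(\rho M\otimes Y)\cong M\otimes\rho_{*}Y$ together with the $\QQ$-module splitting $K\cong\QQ\oplus K'$, which exhibits $\unit$ as a retract of $\rho_{*}\rho\unit=\unit\otimes_{\QQ}K$, forces $\unit\in\langle M\rangle$, so $\cat{I}=\DM(\FF;\QQ)$.) The two itemized statements are then formal: the thick tensor ideal generated by a non-zero motive is non-zero, hence not proper, hence contains $\unit$ and equals $\DM(\FF;\QQ)$; and for a tensor triangulated functor $F\colon\DM(\FF;\QQ)\to\cT$ with $\cT\neq 0$ the kernel $\ker F$ is a thick tensor ideal not containing $\unit$ (as $F(\unit)=\unit_{\cT}\neq 0$), hence $\ker F=0$, which for a triangulated functor is exactly conservativity.

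The only point requiring genuine care is the descent from $K$- to $\QQ$-coefficients, because $K/\QQ$ is an arbitrary extension (possibly infinite and transcendental, and not Galois), so no Galois-descent shortcut is available; one leans instead on faithful flatness of field extensions together with either Balmer's surjectivity-of-spectra criterion or the projection-formula-plus-retract argument above. Everything else is bookkeeping with the definition of simplicity.
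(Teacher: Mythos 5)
Your proof is essentially the paper's: reduce via \Cref{big-conjecture} and \Cref{tannaka-classification} to simplicity of $\DM(\FF;K)\simeq\Db(\cA)$, then descend along the faithful extension-of-scalars functor $\DM(\FF;\QQ)\to\DM(\FF;K)$ by Balmer's surjectivity criterion. The paper does this in two lines (asserting faithfulness without comment and citing \cite[Corollary~1.8]{balmer:surjectivity}); you merely supply the details of conservativity via faithful flatness and compact generators, and add a parallel projection-formula/retract argument that bypasses the spectrum.
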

\begin{proof}
  Let $K/\QQ$ be as in \Cref{big-conjecture}.
  The canonical functor
  \[
    \DM(\FF;\QQ)\to \DM(\FF;K)\simeq\Db(\cA)
  \]
  is faithful and the latter category is simple by \Cref{tannaka-classification}.
  Therefore so is the former, by~\cite[Corollary~1.8]{balmer:surjectivity}.
\end{proof}

\section{Classification of \'etale motives}
\label{sec:tt-etale}

Our next goal is to go beyond rational coefficients, and to say something about torsion phenomena.
In contrast to $\DMbig(\FF;\ZZ)$, there are different useful notions of ``smallness'' for objects in $\DMbiget(\FF;\ZZ)$ (and if the cohomological dimension of $\FF$ is infinite, these are different from being compact).
To fix our ideas we choose to work with the subcategory of rigid objects.
In~\cite{cisinski-deglise:etale-motives} these are called \emph{locally constructible} since they are precisely those motives whose restriction to some finite separable extension $\FF'$ of $\FF$ is constructible of geometric origin (or simply \emph{geometric}, in Voevodsky's terminology), that is,
belongs to the thick subcategory generated by motives of smooth $\FF'$-varieties~\cite[Theorem~6.3.26]{cisinski-deglise:etale-motives}.
The result below remains true (and can in fact be deduced, using~\cite[Corollary~1.8]{balmer:surjectivity}) for the subcategory of geometric motives $\DMgmet(\FF;\ZZ)$.

\begin{Def}
  Let $R$ be a coefficient ring (for us $R=\ZZ$ or a localization or quotient thereof).
  We denote by $\DMlet(\FF;R)$ the full subcategory of $\DMbiget(\FF;R)$ spanned by rigid objects.
  In particular, $\DMlet(\FF;R)$ is a rigid tensor triangulated category.
\end{Def}

Before we can state the main result, we need to recall one more thing.
To any (essentially small) tensor triangulated category $\cT$, one can associate a topological space $\Spc(\cT)$ which consists of the prime thick tensor ideals, endowed with a Zariski type topology~\cite{balmer:spectrum}.
Given an invertible object $u\in\cT$, Balmer~\cite{balmer:sss} also constructs a \emph{comparison map} of topological spaces
\[
  \rho^\sbull_u:\Spc(\cT)\to\Spech(R^\sbull_u)
\]
where $R^\sbull_u=\Hom_{\cT}(\unit,u\potimes{\sbull})$ is the ``graded central ring'' of $\cT$ with respect to $u$, and where $\Spech$ denotes the homogeneous spectrum.
Explicitly, $\rho_u$ sends a prime ideal $\cP$ to the ideal generated by the homogeneous $f\in R^\sbull_u$ such that $\cone(f)\notin\cP$~\cite[Definition~5.1]{balmer:sss}.

\begin{Rem}[Spectrum of Milnor K-theory]
  \label{structure-map-milnor}%
  Assume $\FF$ is of exponential characteristic $p$, that is, $p=1$ if $\chr(\FF)=0$, and $p=\chr(\FF)$ otherwise.
  We now specialize to
  \[
    \cT:=\DMlet(\FF;\ZZ)=\DMlet(\FF;\ZZ[1/p]),\qquad u:=\tilde{\GG}_m=\ZZ(1)[1],
  \]
  the reduced (Tate) motive of $\GG_m$.
  The \'etale motivic cohomology groups in question are known, by deep results of Voevodsky and others.
  Indeed, the Norm Residue Isomorphism (in the form of~\cite[Theorem~6.18]{voevodsky:bloch-kato-conjecture}; see also remarks preceding that result) implies that the diagonal \'etale motivic cohomology groups coincide with the non-\'etale ones:
  \begin{multline*}
    R_{\tilde{\GG}_m}^{\sbull}=\Hom_{\DMlet(\FF;\ZZ[1/p])}(\ZZ,\ZZ(n)[n])\simeq\\ \Hom_{\DM(\FF;\ZZ[1/p])}(\ZZ,\ZZ(n)[n])\simeq \KM_{\sbull}(\FF)[1/p],
  \end{multline*}
  where the second isomorphism is by~\cite{voevodsky:motcoh-CH,nesterenko-suslin:milnor-k-theory,totaro:milnor-k-theory}.

  Finally, we recall that the homogeneous spectrum of Milnor K-theory is known, by an explicit computation in~\cite{thornton:spech-milnor-witt}.
  It has the following points, and is endowed with the minimal topology compatible with the specialization relations (pointing upward):
\[
    \xymatrixcolsep{.05pc}\xymatrix{
 & & & ([\FF^\times],2)  \ar@{-}[ld] \ar@{-}[rd] \ar@{-}[rrrd] & &([\FF^\times],3) \ar@{-}[rd] &([\FF^\times],5) \ar@{-}[d]&\cdots&([\FF^\times],\ell) \\
 & &([P_\alpha],2)& ... &([P_\beta],2)& &([\FF^\times])\ar@{-}[rur]\ar@{}[ru]|{\cdots}}
\]
where $\ell$ runs through the rational primes different from $p$, and where the $P_{\alpha}$ are the orderings of $\FF$ with the Harrison topology (that is, the topology induced by the product topology on $\{\pm 1\}^{\FF^{\times}}$; it is a Boolean space~\cite[VIII.6]{lam:intro-quadratic-forms}).
\end{Rem}

\begin{Thm}
  \label{spc-etale-motives}%
  Let $\FF$ be a field of exponential characteristic $p$ and assume that $\DM(\FF;\QQ)$ is simple (cf.\ \Cref{spc-rational}).
  The comparison map with respect to the Tate motive $\tilde{\GG}_m=\ZZ(1)[1]$ is a homeomorphism
  \begin{equation*}
    \rho^{\sbull}_{\tilde{\GG}_m}:\Spc(\DMlet(\FF;\ZZ))\isoto\Spech(\KM_{\sbull}(\FF)[1/p])
  \end{equation*}
  onto the space described in \Cref{structure-map-milnor}.
\end{Thm}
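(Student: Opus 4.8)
The plan is to compute $\Spc(\cT)$ for $\cT:=\DMlet(\FF;\ZZ)=\DMlet(\FF;\ZZ[1/p])$ by decomposing it over $\Spec(\ZZ[1/p])$ and matching it fibrewise, along the comparison map $\rho:=\rho^{\sbull}_{\tilde{\GG}_m}$, with the space described in \Cref{structure-map-milnor}. The map $\rho$ is defined since $\tilde{\GG}_m$ is $\otimes$-invertible, and is automatically continuous and spectral; what must be shown is that it is a homeomorphism. First I would set up an arithmetic fracture. For a prime $\ell\neq p$ the reduction functor $\cT\to\DMlet(\FF;\ZZ/\ell)$ exhibits $\Spc(\DMlet(\FF;\ZZ/\ell))$ as the closed subset $Z_\ell:=\supp(\unit/\ell)$ of $\Spc(\cT)$, where $\unit/\ell:=\cone(\ell\colon\unit\to\unit)$; the $Z_\ell$ are pairwise disjoint because $\unit/\ell\otimes\unit/{\ell'}=0$ for $\ell\neq\ell'$; and the rationalisation $\cT\to\cT_{\QQ}:=\DMlet(\FF;\QQ)\simeq\DM(\FF;\QQ)$ is a finite localisation with kernel generated by the $\unit/\ell$, so $\Spc(\cT_{\QQ})=\Spc(\cT)\setminus\bigcup_\ell Z_\ell$ by the localisation theorem~\cite{balmer:spectrum}. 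Under the simplicity hypothesis this is a single point $\mathfrak{t}$, giving a set partition $\Spc(\cT)=\{\mathfrak{t}\}\sqcup\coprod_{\ell\neq p}Z_\ell$, mirrored on the target by the structure map $\Spech(\KM_{\sbull}(\FF)[1/p])\to\Spec(\ZZ[1/p])$: its fibre over $(\ell)$ is $\Spech(\KM_{\sbull}(\FF)/\ell)$, and its fibre over $(0)$ is the single point $([\FF^\times])$ of \Cref{structure-map-milnor} — the latter because $\{-1\}$ is $2$-torsion in $\KM_1(\FF)$, whence the relation $\{a\}^2=\{a\}\{-1\}$ forces $\KM_{\geq 1}(\FF)\otimes\QQ$ to be nil. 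A short computation identifies $\rho(\mathfrak{t})$ with $([\FF^\times])$, and naturality of the comparison map~\cite{balmer:sss} identifies $\rho|_{Z_\ell}$, up to the closed immersion onto the $(\ell)$-fibre induced by $\KM_{\sbull}(\FF)[1/p]\twoheadrightarrow\KM_{\sbull}(\FF)/\ell$, with the comparison map of $\DMlet(\FF;\ZZ/\ell)$ relative to $\tilde{\GG}_m\otimes\ZZ/\ell$. So the proof reduces to a statement mod $\ell$ for each $\ell\neq p$.

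Fix $\ell\neq p$. Here I would invoke rigidity~\cite{cisinski-deglise:etale-motives}, which presents $\DMlet(\FF;\ZZ/\ell)$ in terms of (geometric) $\ZZ/\ell$-representations of the absolute Galois group of $\FF$, makes $\tilde{\GG}_m\otimes\ZZ/\ell\simeq\mu_\ell[1]$ invertible, and — together with the Norm Residue isomorphism~\cite[Theorem~6.18]{voevodsky:bloch-kato-conjecture} — computes its graded central ring as $\bigoplus_n H^n_{\et}(\FF;\mu_\ell^{\otimes n})=\KM_{\sbull}(\FF)/\ell$. The goal becomes that
\[
  \rho^{\sbull}_{\mu_\ell[1]}\colon\Spc(\DMlet(\FF;\ZZ/\ell))\isoto\Spech(\KM_{\sbull}(\FF)/\ell)
\]
is a homeomorphism onto the fibre over $(\ell)$ of the space of \Cref{structure-map-milnor}. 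For $\ell$ odd that fibre is a single point $\mathfrak{m}_\ell$ — since $\{-1\}=0$ in $\KM_1(\FF)/\ell$ (being $2$- and $\ell$-torsion), the ideal $\KM_{\geq 1}(\FF)/\ell$ is again nil — and one must show the source is a point as well; this refines~\cite{gallauer:tt-dtm-algclosed}, and Paul Balmer's argument removes the hypothesis $\mu_\ell\subset\FF$ needed there, since the base-change functor along the prime-to-$\ell$ extension $\FF(\mu_\ell)/\FF$ admits a transfer (as $[\FF(\mu_\ell):\FF]$ is invertible mod $\ell$), so that $\Spc(\DMlet(\FF;\ZZ/\ell))$ is a continuous surjective image of $\Spc(\DMlet(\FF(\mu_\ell);\ZZ/\ell))$. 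For $\ell=2$ one has $\mu_2\subset\FF$ automatically, so no descent is needed; the fibre is now the space of orderings of $\FF$ with the Harrison topology together with one closed point $\mathfrak{m}_2$ onto which every ordering specialises, and the task is to identify $\Spc(\DMlet(\FF;\ZZ/2))$ with it. Producing the primes $\mathfrak{p}_\alpha$ attached to orderings — \ie making the real spectrum of $\FF$ (and the geometry of its Witt ring, via the supports of the Koszul objects on the symbols $\{a\}$, $a\in\FF^\times$) visible tensor-triangularly — is where I expect the real work, extending~\cite{gallauer:tt-dtm-algclosed} beyond the algebraically closed case; this is the main obstacle.

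Finally I would reassemble: $\rho$ is then a continuous bijection restricting to homeomorphisms on the closed pieces $Z_\ell$ and on $\{\mathfrak{t}\}$, so it only remains to match the specialisations between the pieces, \ie that $\mathfrak{t}$ specialises exactly onto the closed point $\mathfrak{m}_\ell=([\FF^\times],\ell)$ of each $\ell$-fibre (and, for $\ell=2$, onto no ordering prime $\mathfrak{p}_\alpha$) — which follows from the shape of the localisation sequences and mirrors the closure of $([\FF^\times])$ in the target (\Cref{structure-map-milnor}); as these closures together with the $Z_\ell$ generate the topology, $\rho$ and $\rho\inv$ are continuous. In particular, if $\FF$ has no orderings — \eg when $\car(\FF)=p>0$ — the mod-$2$ piece collapses onto $\mathfrak{m}_2$ and one recovers $\Spc(\DMlet(\FF;\ZZ))\cong\Spec(\ZZ[1/p])$.
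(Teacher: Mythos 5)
Your overall strategy (decompose $\Spc(\cT)$ over $\Spec(\ZZ[1/p])$, handle the rational fibre by the simplicity hypothesis, handle each $\ell$-fibre by rigidity plus the Norm Residue isomorphism, and use Balmer's transfer trick along $\FF(\mu_\ell)/\FF$ for odd $\ell$) is the same as the paper's. However, there are two places where the proposal goes astray.

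First, and most seriously, the claim that the specialization relations $([\FF^\times])\rightsquigarrow([\FF^\times],\ell)$ ``follow from the shape of the localisation sequences'' is not a proof, and I do not see how to make it one without a further input. After establishing the fibre-wise bijection, the paper must show that $\ker(-\otimes\ZZ/\ell)\subset\ker(-\otimes\QQ)$ inside $\DMlet(\FF;\ZZ_{\langle\ell\rangle})$. This is proved using the $\ell$-adic realization $\hat{\rho}_\ell^*$: if $M\otimes\ZZ/\ell\simeq 0$ then $\hat{\rho}_\ell^*M\simeq 0$ by~\cite[Lemma~7.2.3, (7.2.4.a)]{cisinski-deglise:etale-motives}, and a commuting square identifies this with the $\ell$-adic realization of $M\otimes\QQ$ over $\QQ_\ell$. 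One then invokes the simplicity of $\DMlet(\FF;\QQ)$ a \emph{second} time to conclude that this realization is conservative, whence $M\otimes\QQ\simeq 0$. This second, independent use of the simplicity hypothesis — to obtain conservativity of the $\ell$-adic realization with rational coefficients — is exactly the content your proposal is missing. An arithmetic fracture square by itself only tells you that $M$ is controlled by $M\otimes\QQ$, $M^\wedge_\ell$ and the gluing datum; it does not tell you that vanishing mod $\ell$ forces vanishing rationally. (You also implicitly need that the square~\eqref{eq:support-Zl-square} is cartesian, so that $Z_\ell$ carries the right subspace topology; the paper records this, but it is a minor point.)

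Second, you overstate the novelty of the $\ell=2$ case. In the paper, the statement that $\rho^\sbull_{\ZZ/\ell[1]}$ is a homeomorphism whenever $\mu_\ell\subset\FF$ — in particular for $\ell=2$, where the ordering primes $\mathfrak{p}_\alpha$ appear — is simply cited from~\cite[Proposition~6.5]{gallauer:tt-dtm-algclosed}. So the ``real work'' you locate there is not new work in this paper; the new contribution is precisely the transfer argument removing the root-of-unity hypothesis (for odd $\ell$) plus the specialization argument described above. Since you defer the $\ell=2$ case as your ``main obstacle'' without resolving it, and replace the specialization argument with a vague appeal to localization sequences, the proof as written has genuine gaps at both points.
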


The proof of \Cref{spc-etale-motives} will proceed along the lines of~\cite[Theorem~6.10]{gallauer:tt-dtm-algclosed}.
We need for this the following generalization of~\cite[Theorem~6.2]{gallauer:tt-dtm-algclosed}.
We learned from Paul Balmer how to remove the assumption that the base field contain a primitive $\ell$th root of unity.
Note also that this Proposition is unconditional (that is, it does not depend on \Cref{big-conjecture}).
\begin{Prop}
  \label{etale-motives-finite-coefficients}
  Let $\FF$ be a field, and let $\ell$ be a prime different from $\car(\FF)$.
  The comparison map with respect to the Tate motive $\tilde{\GG}_m=\ZZ/\ell(1)[1]$ is a homeomorphism
  \[
    \rho_{\tilde{\GG}_m}^{\sbull}:\Spc(\DMlet(\FF;\ZZ/\ell))\isoto \Spech(\KM_{\sbull}(\FF)/\ell).
  \]
\end{Prop}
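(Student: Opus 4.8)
The plan is to first identify the codomain and then to reduce, by Galois descent along the cyclotomic extension, to the case in which $\FF$ contains a primitive $\ell$-th root of unity, which is exactly \cite[Theorem~6.2]{gallauer:tt-dtm-algclosed}.

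Just as in \Cref{structure-map-milnor}, the norm residue isomorphism (now with $\ZZ/\ell$-coefficients) identifies the graded central ring of $\DMlet(\FF;\ZZ/\ell)$ with respect to $\tilde{\GG}_m=\ZZ/\ell(1)[1]$, namely $\bigoplus_{n}\Hom_{\DMlet(\FF;\ZZ/\ell)}(\ZZ/\ell,\ZZ/\ell(n)[n])$, with $\KM_\sbull(\FF)/\ell$; this step needs no hypothesis on roots of unity, so the content of the Proposition is precisely that $\rho^\sbull_{\tilde{\GG}_m}$ is a \emph{homeomorphism}. If $\ell=2$ then $-1$ is a primitive $\ell$-th root of unity in $\FF$ (as $\car(\FF)\neq 2$), and \cite[Theorem~6.2]{gallauer:tt-dtm-algclosed} applies verbatim; so assume henceforth that $\ell$ is odd.

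Set $\FF'=\FF(\mu_\ell)$, a finite Galois extension of $\FF$ with group $G$; since $G\hookrightarrow(\ZZ/\ell)^\times$ the degree $d=[\FF':\FF]$ divides $\ell-1$, and is in particular invertible in $\ZZ/\ell$ — it is this invertibility that makes the descent work on both sides. On the motivic side, let $u^*\colon\DMlet(\FF;\ZZ/\ell)\to\DMlet(\FF';\ZZ/\ell)$ be base change, with its finite right adjoint $u_*$ and its compatible $G$-action. Because $d$ is invertible, the unit $\unit\to u_*u^*\unit$ is a split monomorphism ($\tfrac1d$ times the trace is a section), so $\Spc(u^*)$ is surjective by \cite[Corollary~1.8]{balmer:surjectivity}; moreover $A:=u_*\unit_{\FF'}$ is a commutative separable algebra in $\DMlet(\FF;\ZZ/\ell)$ with $u^*A\simeq\bigoplus_{g\in G}\unit_{\FF'}$, whose category of modules in $\DMlet(\FF;\ZZ/\ell)$ is $\DMlet(\FF';\ZZ/\ell)$ by finite étale descent. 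From this $G$-Galois structure one extracts that the fibres of $\Spc(u^*)$ are exactly the $G$-orbits and that $\Spc(u^*)$ is closed, so that it induces a homeomorphism $\Spc(\DMlet(\FF';\ZZ/\ell))/G\isoto\Spc(\DMlet(\FF;\ZZ/\ell))$. Entirely parallel on the algebraic side, the restriction map $\KM_\sbull(\FF)/\ell\to\KM_\sbull(\FF')/\ell$ is split by $\tfrac1d$ times the norm $N_{\FF'/\FF}$ (projection formula), identifies $\KM_\sbull(\FF)/\ell$ with $(\KM_\sbull(\FF')/\ell)^G$, and exhibits $\Spech(\KM_\sbull(\FF)/\ell)$ as the $G$-quotient of $\Spech(\KM_\sbull(\FF')/\ell)$ — or one may simply match the two explicit descriptions furnished by \Cref{structure-map-milnor}, which for $\ell$ odd involve no orderings. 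Finally, naturality of Balmer's comparison map with respect to $u^*$ yields a square, commutative and $G$-equivariant along the $\FF'$-side, relating $\rho^\sbull_{\tilde{\GG}_m}$ for $\FF$ and for $\FF'$ via $\Spc(u^*)$ and the corresponding restriction on homogeneous spectra; since the two vertical maps are the $G$-quotient maps just described and $\rho^\sbull_{\tilde{\GG}_m}$ for $\FF'$ is a $G$-equivariant homeomorphism by \cite[Theorem~6.2]{gallauer:tt-dtm-algclosed}, the map it induces on $G$-quotients — which is $\rho^\sbull_{\tilde{\GG}_m}$ for $\FF$ — is a homeomorphism as well.

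The main obstacle is the motivic descent step: one must verify carefully that $\DMlet(\FF';\ZZ/\ell)$ is the category of modules over the separable Galois algebra $u_*\unit_{\FF'}$ inside $\DMlet(\FF;\ZZ/\ell)$, and extract from Balmer's separability and descent machinery the sharp statement that the fibres of $\Spc(u^*)$ are the full $G$-orbits, and that the resulting continuous bijection on $G$-quotients is closed (equivalently, a homeomorphism of spectral spaces); if desired, these fibres may instead be analysed via the residue-field tensor-triangulated categories at the relevant primes. By comparison, the splitting and $G$-invariance on the Milnor K-theory side, and the base-change compatibility of $\rho^\sbull_{\tilde{\GG}_m}$, are routine.
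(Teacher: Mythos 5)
Your plan is sound and would work, but it is significantly heavier than the proof in the paper, which exploits a simplification that you observe but do not actually use. For $\ell$ odd, the homogeneous spectrum $\Spech(\KM_\sbull(\FF)/\ell)$ is a \emph{singleton} (there are no orderings at odd primes — precisely the observation you make parenthetically about \Cref{structure-map-milnor}). Consequently, to prove that $\rho^\sbull_{\tilde{\GG}_m}$ is a homeomorphism it is enough to show that $\Spc(\DMlet(\FF;\ZZ/\ell))$ is a singleton, and for that the \emph{surjectivity} of $\Spc(u^*)$ — which you obtain, as the paper does, from $[\FF':\FF]\mid \ell-1$ being invertible and hence the unit being a retract of $u_*u^*\unit$ — already finishes the argument, since $\Spc(\DMlet(\FF';\ZZ/\ell))$ is a singleton over $\FF'=\FF(\mu_\ell)$ where the earlier result applies. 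You then do not need any of the following: that $A=u_*\unit_{\FF'}$ is a $G$-Galois tt-ring, the finite-étale-descent identification of $\DMlet(\FF';\ZZ/\ell)$ with $A$-modules, the sharp statement that the fibres of $\Spc(u^*)$ are full $G$-orbits and that the induced map on $G$-quotients is a homeomorphism, the parallel $G$-invariant computation in Milnor $K$-theory, or the $G$-equivariance and naturality of the comparison map. None of that is wrong — the tt-Galois machinery you gesture at is available in the literature — but you flag it yourself as the ``main obstacle,'' and it can be avoided entirely. The $\ell=2$ case is handled identically in both arguments ($-1\in\FF$ is a primitive square root of unity). Also note that the paper's identification of $\DMlet(\FF;\ZZ/\ell)$ with the derived category of discrete $G_\FF$-modules via Suslin--Voevodsky rigidity is what underlies both the central-ring computation and the earlier reference you invoke; it is worth making that step explicit rather than deferring to the rational-coefficient computation in \Cref{structure-map-milnor}.
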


  \begin{proof}
    By Rigidity (Suslin-Voevodsky, \cite[Theorem~6.3.11]{cisinski-deglise:etale-motives}), the tensor triangulated category $\DMlet(\FF;\ZZ/\ell)$ is identified with $\Dbc(\Mod(G_{\FF};\ZZ/\ell))$, the bounded constructible derived category of discrete $G_{\FF}$-modules over $\ZZ/\ell$, where $G_{\FF}$ denotes the absolute Galois group of $\FF$.
    Then \cite[Lemma~6.4]{gallauer:tt-dtm-algclosed} provides a further identification with $\Db(\modules(G_{\FF};\ZZ/\ell))$, the bounded derived category of finite-dimensional discrete $G_{\FF}$-modules.
    It follows that
    \[
      R^{\sbull}_{\tilde{\GG}_m}\simeq  \Hm^{\sbull}(G_{\FF},\mu_{\ell}\potimes{\sbull})\simeq K_{\sbull}^M(\FF)/\ell
    \]
    by the Norm Residue Isomorphism (due to Voevodsky \etal).

    It remains to show that the comparison map $\rho^{\sbull}_{\tilde{\GG}_m}$ is a homeomorphism.
    Assume first that $\FF$ contains a primitive $\ell$th root of unity.
    In that case, $\tilde{\GG}_m=\ZZ/\ell(1)[1]\simeq\ZZ/\ell[1]$ and we have proven in~\cite[Proposition~6.5]{gallauer:tt-dtm-algclosed} that the comparison map $\rho^{\sbull}_{\ZZ/\ell[1]}$ is a homeomorphism.

    If $\FF$ does not contain a primitive $\ell$th root of unity, let $\FF':=\FF[\zeta]$ be obtained by adjoining such.
    Necessarily, $\ell$ is odd.
    As recalled in \Cref{structure-map-milnor}, the homogeneous spectrum of Milnor $K$-theory at odd primes is a singleton space, for any field.
    We are therefore reduced to show that $\Spc(\DMlet(\FF;\ZZ/\ell))=\ast$ is a singleton space too.
    But, $\ell$ being odd, we deduce that the degree of the field extension $\FF'$ is $[\FF':\FF]=\ell-1$ which is invertible in $\ZZ/\ell$.
    The composite of the base change functor $\DMlet(\FF;\ZZ/\ell)\to\DMlet(\FF';\ZZ/\ell)$ with its right adjoint is tensoring with the motive of $\Spec(\FF')$ which is faithful (since the Euler characteristic of this motive is $\ell-1$).
    It follows that the base change functor itself is faithful, and so the induced map on spectra is surjective~\cite[Corollary~1.8]{balmer:surjectivity}.
    Since $\Spc(\DMlet(\FF';\ZZ/\ell))=\ast$, we deduce that the same is true for $\FF$, and this concludes the argument.
    
\end{proof}

  \begin{proof}[Proof of \Cref{spc-etale-motives}]
    We start by showing that $\rho^{\sbull}:=\rho_{\tilde{\GG}_m}^{\sbull}$ is a bijection.
  For this we think of the map as fibered over $\Spec(\ZZ[1/p])$ and analyze it one prime $(\ell)\neq (p)$ at a time.

  If $\ell=0$, we use~\cite[Corollary~5.6(c)]{balmer:sss} to identify the fiber of $\rho^{\sbull}$ over $\Spec(\QQ)$ with
  \[
    \Spc(\DMlet(\FF;\ZZ)\otimes\QQ)\xrightarrow{\rho^{\sbull}}\Spech(K^{M}_{\sbull}(\FF)\otimes\QQ).
  \]
  The codomain is a singleton space, and so is the domain by our assumption, given the equivalences~(\cite[Theorem~16.1.2]{cisinski-deglise:dm} and~\cite[Corollary~5.4.9]{cisinski-deglise:etale-motives}):
  \[
    \left(
      \DMlet(\FF;\ZZ)\otimes\QQ
\right)^{\natural}\simeq\DMlet(\FF;\QQ)\simeq\DM(\FF;\QQ).
  \]
  (Here, $(-)^{\natural}$ denotes the idempotent completion, and we also use~\cite[Proposition~3.13]{balmer:spectrum}.)
  
  Next, if $\ell$ is a finite prime, the fiber of
  \[
    \rho:\Spc(\DMlet(\FF;\ZZ))\xrightarrow{\rho^{\sbull}}\Spech(K^{M}_{\sbull}(\FF))\to\Spec(\ZZ)
  \]
  over $\ell$ is by definition the support of $\ZZ/\ell$ in both the tensor triangular spectrum and the homogeneous spectrum.
  Consider the quotient morphism $\gamma:\ZZ\to\ZZ/\ell$, and the induced adjunction on categories of \'etale motives:
  \[
    \gamma^*:\DMlet(\FF;\ZZ)\rightleftarrows\DMlet(\FF;\ZZ/\ell):\gamma_*
  \]
  We have a commutative square~\cite[Theorem~5.3(c)]{balmer:sss}:
  \begin{equation}
    \label{eq:support-Zl-square}
    \xymatrix{
      \Spc(\DMlet(\FF;\ZZ/\ell))
      \ar[r]^{\rho^{\sbull}_{\tilde{\GG}_m}}_{\sim}
      \ar[d]_{\Spc(\gamma^*)}
      &
      \Spech(K_{\sbull}^M(\FF)/\ell)
      \ar[d]^{\Spech(\gamma)}
      \\
      \Spc(\DMlet(\FF;\ZZ))
      \ar[r]_{\rho^{\sbull}}
      &\Spech(K_{\sbull}^M(\FF))}
  \end{equation}
  where the top horizontal arrow is a homeomorphism by \Cref{etale-motives-finite-coefficients}.
  Since $\ZZ/\ell$ is the image of the unit under $\gamma_*$, we deduce from \cite[Theorem~1.7]{balmer:surjectivity} that $\supp(\ZZ/\ell)=\Img(\Spc(\gamma^*))$.
  Also, the right vertical arrow is injective with image the support of $\ZZ/\ell$.
  It follows that the left vertical arrow is injective as well, and the bottom horizontal map $\rho^{\sbull}$ indeed identifies the support of $\ZZ/\ell$ on both sides, as required.

  We will now show that this map
  \[
    \rho^{\sbull}:\Spc(\DMlet(\FF;\ZZ))\to\Spech(K_{\sbull}^M(\FF))
  \]
  is a homeomorphism.
  First, note that in the square~\eqref{eq:support-Zl-square}, the left vertical arrow is a homeomorphism onto its image.
  (In fact, the square~\eqref{eq:support-Zl-square} is cartesian.)
  This is relevant for $\ell=2$ as it reduces us now to prove that the specialization relations $([\FF^\times])\rightsquigarrow ([\FF^\times],\ell)$ lift to specialization relations in $\Spc(\DMlet(\FF;\ZZ))$.
  (We use that a bijective spectral map between spectral spaces which lifts all specialization relations is necessarily a homeomorphism.)
  In other words, we need to show the inclusion of prime ideals
  \[
    \ker(-\otimes\ZZ/\ell)\subset\ker(-\otimes\QQ)
  \]
  in the category $\DMlet(\FF;\ZZ_{\langle\ell\rangle})$.
  (Here, $\ZZ_{\langle\ell\rangle}\subset\QQ$ is the localization of $\ZZ$ at the prime ideal generated by $\ell$.)

  For this we use the $\ell$-adic completion (interpreted as the $\ell$-adic realization) discussed in~\cite[\S\,7.2]{cisinski-deglise:etale-motives}, with $R$ in their notation being our $\ZZ_{\langle\ell\rangle}$.
  Suppose that $M\in\DMlet(\FF,\ZZ_{\langle{\ell}\rangle})$ satisfies $M\otimes\ZZ/\ell\simeq 0$.
  It follows from~\cite[Lemma~7.2.3 and (7.2.4.a)]{cisinski-deglise:etale-motives} that the $\ell$-adic realization $\hat{\rho}_\ell^*M\simeq 0$ of $M$ vanishes as well.
  And so does therefore the image of $M$ under the composite of the left vertical and bottom horizontal arrow in the following diagram:
  \[
    \xymatrix{
      \DMlet(\FF;\ZZ_{\langle\ell\rangle})
      \ar[d]^{\hat{\rho}_{\ell}^*}
      \ar[r]^{\otimes\QQ}
      &
      \DMlet(\FF;\QQ)
      \ar[d]^{\hat{\rho}_{\ell}^*\otimes\QQ}
      \\
      \Dbc(\FF;\ZZ_{\ell})
      \ar[r]^{\otimes\QQ}
      &
      \Dbc(\FF;\QQ_{\ell})
      }
  \]
  The $\ell$-adic realization on the right is defined as the $\QQ$-linearization of $\hat{\rho}^*_\ell$ and is an exact tensor functor (\cite[Theorem~7.2.24]{cisinski-deglise:etale-motives}).
  In particular, the square commutes and we deduce that $M\otimes\QQ$ lies in the kernel of $\ell$-adic realization.
  By our assumption, the category $\DMlet(\FF;\QQ)$ is simple and this kernel is therefore trivial.
  We conclude that $M\otimes\QQ\simeq 0$ as required.
  (Of course, we then necessarily have $M\simeq 0$ to start with.)
\end{proof}

\begin{Rem}[Description of prime ideals]
  \label{primes}%
  Let us describe the primes in $\DMlet(\FF;\ZZ)$ more explicitly.
  \begin{enumerate}
  \item The prime corresponding to $([\FF^\times])$ consists of the \emph{torsion objects}, that is, those objects $M$ with $M\otimes\QQ\simeq 0$.
    Since (under the assumption that \Cref{big-conjecture} is satisfied) any of the standard cohomology theories is conservative for motives with rational coefficients, this prime is also equal to the kernel of the functor
    \[
      \DMlet(\FF;\ZZ)\xrightarrow{\ladic} \Db(\QQ_{\ell})
    \]
    for any $\ell\neq p$.
  \item The prime corresponding to $([\FF^\times],\ell)$ for $\ell\neq p$ consists of the objects $M$ with $M\otimes \ZZ/\ell\simeq 0$.
    By Rigidity (see the proof of \Cref{etale-motives-finite-coefficients}), it can also be described as the kernel of the functor
    \[
      \DMlet(\FF;\ZZ)\xrightarrow{\times_{\FF}\bar{\FF}}\DMlet(\bar{\FF};\ZZ)\xrightarrow{/\ell} \DMlet(\bar{\FF};\ZZ/\ell)\simeq\Db(\ZZ/\ell).
    \]
  \item Finally, let $P_\alpha$ be a positive cone of $\FF$, and choose a completion $\FF_{\alpha}$.
    Then the prime corresponding to $([P_\alpha],2)$ is the kernel of the functor
    \begin{multline*}
      \DMlet(\FF;\ZZ)\xrightarrow{\times_{\FF}\FF_\alpha}\DMlet(\FF_\alpha;\ZZ)\xrightarrow{/2}      \DMlet(\FF_\alpha;\ZZ/2)\simeq\\
 \Db(\ZZ/2[C_2])\onto\stab(\ZZ/2[C_2])\simeq \modules(\ZZ/2),
    \end{multline*}
    where $C_2=\Gal(\bar{\FF}_\alpha/\FF_\alpha)$ denotes the cyclic group of order 2, and the functor $\Db(\ZZ/2[C_2])\onto\stab(\ZZ/2[C_2])$ is quotienting out the complexes of projective objects.
  \end{enumerate}
\end{Rem}
\bibliographystyle{alpha}
\bibliography{ref-ttdmet}
\end{document}